\title{Notes on generalised spin structures}
\author{Andrew D.K. Beckett}
\date{July 2026}
\theoremstyle{plain}
\newtheorem{lemma}{Lemma}
\newtheorem{proposition}[lemma]{Proposition}
\newtheorem{definition}[lemma]{Definition}
\theoremstyle{definition}
\newcommand{\fg}{\mathfrak{g}}
\newcommand{\fh}{\mathfrak{h}}
\newcommand{\fk}{\mathfrak{k}}
\newcommand{\fso}{\mathfrak{so}}
\newcommand{\fiso}{\mathfrak{iso}}
\newcommand{\fX}{\mathfrak{X}}
\newcommand{\fG}{\mathfrak{G}}
\newcommand{\fsymm}{\mathfrak{symm}}
\newcommand{\RR}{\mathbb{R}}
\newcommand{\ZZ}{\mathbb{Z}}
\newcommand{\eA}{\mathscr{A}}
\newcommand{\eL}{\mathscr{L}}
\newcommand{\Wbundle}{\underline{W}}
\newcommand{\Qbar}{\overline{Q}}
\newcommand{\Gbar}{\overline{G}}
\newcommand{\Hbar}{\overline{H}}
\newcommand{\nablahat}{\widehat{\nabla}}
\newcommand{\eLhat}{\widehat{\eL}}
\newcommand{\dhat}{\widehat{\dd}}
\newcommand{\Phat}{\widehat{P}}
\newcommand{\Rhat}{\widehat{R}}
\newcommand{\phihat}{\widehat{\phi}}
\newcommand{\Phihat}{\widehat{\Phi}}
\newcommand{\varphihat}{\widehat{\varphi}}
\newcommand{\sigmahat}{\widehat{\sigma}}
\newcommand{\varphitilde}{\widetilde{\varphi}}
\newcommand{\Ptilde}{\widetilde{P}}
\newcommand{\pihat}{\widehat{\pi}}
\newcommand{\varpihat}{\widehat{\varpi}}
\newcommand{\Wedge}{\mathchoice{{\textstyle\bigwedge}}{\bigwedge}{\bigwedge}{\bigwedge}}	% display version is {\texstyle\bigwedge}; text, script and scriptscript versions are \bigwedge
\newcommand{\Otimes}{\mathchoice{{\textstyle\bigotimes}}{\bigotimes}{\bigotimes}{\bigotimes}}
\newcommand{\1}{\mathbb{1}}
\newcommand{\ccomm}[2]{\qty[\comm{#1}{#2}]}
\DeclareMathOperator{\Id}{Id}
\DeclareMathOperator{\End}{End}
\DeclareMathOperator{\GL}{GL}
\DeclareMathOperator{\Spin}{Spin}
\DeclareMathOperator{\Pin}{Pin}
\DeclareMathOperator{\Orth}{O}
\DeclareMathOperator{\SO}{SO}
\DeclareMathOperator{\Sp}{Sp}
\DeclareMathOperator{\U}{U}
\DeclareMathOperator{\SU}{SU}
\DeclareMathOperator{\Lie}{Lie}
\DeclareMathOperator{\Ad}{Ad}
\DeclareMathOperator{\ad}{ad}
\DeclareMathOperator{\pr}{pr}
\begin{document}

\maketitle

\begin{abstract}
	\noindent
	We review some definitions and basic notions relating to generalised spin structures and introduce the notion of reducibility. We discuss connections on these structures, define a covariant Lie derivative for associated bundles and develop a covariant Cartan calculus. We introduce an extension of the Lie algebra of Killing vectors, the symmetry algebra, and show that it has a representation on sections of associated bundles. We discuss homogeneous generalised spin structures and provide a characterisation of them in terms of lifts of the isotropy representation.
\end{abstract}

\tableofcontents

\section{Introduction}
\label{sec:introduction}

A \emph{generalised spin structure} or \emph{spin-\(G\) structure} is an analogue of a spin structure in which the structure group \(\SO(s,t)\) of the special orthonormal frame bundle of a pseudo-Riemannian manifold is lifted not to the spin group \(\Spin(s,t)\) but to its extension \(\Spin^G(s,t):=(\Spin(s,t)\times G)/\ZZ_2\) by a Lie group \(G\) with a central subgroup isomorphic to \(\ZZ_2\); the \(\ZZ_2\) in the quotient is diagonally embedded. They are of interest to geometers, topologists and physicists for many reasons, not least because they can overcome the well-known topological obstructions to the existence of true spin structures, allowing one to define spinor bundles, Dirac operators and other constructions well known in spin geometry, at the cost of the spinors being ``twisted'' by or ``charged'' with respect to the auxiliary group \(G\).

The first example in the literature of such a generalisation was the spin-\(c\) structure, introduced in 1964 by Atiyah, Bott and Shapiro \cite{Atiyah1964} (see also \cite{Atiyah1973,Whiston1975}\cite[App.D]{Lawson1989}\cite[\S2.4]{Friedrich2000}), which has \(G=\U(1)\); these are well known for their relationship to complex geometry and the celebrated Seiberg-Witten theory.
The spin-\(c\) group arises naturally in the study of complex spinors because it is the product of \(\U(1)\) (the circle of unit scalars) with the spin group inside the complex Clifford algebra.
In positive-definite signature, orientable Riemannian manifolds of dim \(\leq 3\) are spin, with the complex manifold \(\mathbb{CP}^2\) being a well-known example of a 4-manifold which is \textit{not} spin but \textit{is} spin-\(c\) -- in fact, all orientable almost complex manifolds are, as are all orientable 4-manifolds \cite[Prop.5.7.4,Rem.5.7.5]{Gompf1999} (see \cite{Whitney1941,Hirzebruch1958} for the original result in the compact case, and also \cite{LeBrun2021} for an alternative proof).
% The story of the references here is a little convoluted. Whitney1941 and Hirzebruch1958 show that W_3(M)=0 for all *compact* orientable 4-manifolds; the concept of spin-c had not yet been described but this is equivalent to the existence of such a structure. LeBrun2021 offers a more geometric proof of the same statement and provides references to modern treatments of Hirzebruch1958's proof in the introduction. Albanese2021 cites a short unpublished note by Teichner's on the removal of the assumptions of compactness, which Gompf1999 reproduces the proof from.
On the other hand, in dimension 5, the \emph{Wu manifold} \(\SU(2)/\SO(3)\) is a neither spin nor spin-\(c\) \cite[p.50]{Friedrich2000}.

Interest in more general structures was set off in the late 1970s and early 1980s by Hawking and Pope \cite{Hawking1978} who, motivated by problems in the construction of a theory of quantum gravity, proposed using them to define spinor fields on arbitrary 4-manifolds and elucidated their physical significance; this idea was taken up by Back, Freund and Forger who provided an elementary argument that a spin-\(\SU(2)\) structure can be defined on \emph{any} connected, oriented Riemannian 4-manifold \cite{Back1978}.
Generalised spin structures were then put on a firmer mathematical footing by Forger and Hess \cite{Forger1979} and also by Avis and Isham \cite{Avis1980}, with the latter providing in particular a classification for spin-\(\SU(2)\) structures on Riemannian 4-manifolds in terms of characteristic classes. More recent work on physics applications includes \cite{Janssens2018,Lazaroiu2019}.

The case \(G=\SU(2)\cong\Sp(1)\) is the quaternionic analogue of spin-\(c\) in the sense that any almost quaternionic manifold is spin-\(h\); as such, it came to be known as \emph{spin-\(h\)} (or \emph{spin-\(q\)}) when it was taken up by geometers, in particular by Nagase and Bär in the 1990s \cite{Nagase1995,Bar1999} and by others more recently. It has been shown that such structures always exist for orientable Riemannian manifolds in dimension \(\leq 5\), and in the compact case in dimension \(\leq 7\), but there are infinitely many homotopy types of oriented 8-manifolds which do not admit such structures \cite{Lawson2023,Albanese2021}. A treatment for indefinite signature is found in \cite{Lazaroiu2019_1}. A natural generalisation which has also received significant recent interest from geometers is the case \(G=\Spin(k)\) for \(k\in\mathbb{N}\) (giving spin, spin-\(c\) and spin-\(h\) for \(k=1,2,3\) respectively), known as \emph{spinorially twisted spin} or \emph{(generalised) spin-\(k\)} structures \cite{Espinosa2016,Herrera2019,Herrera2018,Albanese2021,Artacho2023}. Two major results here are that every spin-\(k\) manifold admits a codimension \(k\) embedding into a spin manifold, and every manifold is spin-\(k\) for some \(k\), but there is no \(k\) for which every manifold is spin-\(k\) \cite{Albanese2021}.

While any spin-\(G\) structure is \emph{sufficient} for the existence of associated bundles of spinors, the question of the \emph{necessary} structure for such a bundle, or more generally a bundle of real Clifford modules, has been resolved by examining the \emph{real Lipschitz structures} in each signature \((s,t)\) \cite{Lazaroiu2019_1}. These can be described in a number of equivalent ways \cite[Thm.8.2]{Lazaroiu2019_1} but the most relevant for us is the \emph{canonical spinor structure} which is, depending on \(s-t\mod 8\), a spin structure, a pin structure, a spin-\(q\) structure, a pin-\(h\) structure or a \(\Spin^0_{\pm}\) structure; pin-\(h\) is a straightforward generalisation of spin-\(h\) for non-orientable manifolds (\emph{pin-\(k\)} structures for \(k\in\mathbb{N}\) are also discussed in \cite{Albanese2021}) and last type is a spin-\(G\) structure for \(G=\Pin(2,0)\) or \(G=\Pin(0,2)\).

\paragraph{A note on this note}
This note began life as part of the background to another work but has been modified, expanded and released separately, both because it began to exceed its original scope and because it may be of more general interest. Nonetheless, the content is somewhat biased in favour of the author's idiosyncratic applications, so discussion of many important topics is lacking. These include: questions of existence and uniqueness via classifying spaces and characteristic classes; Dirac operators and related topics such as index theorems and Seiberg-Witten theory; detailed examples and applications; and major results both classical and modern. This note may be expanded later into a more complete review of the subject (and as such section and equation numbers etc. are very likely to change!), but for now, the interested reader is encouraged to make good use of the bibliography.

Much of the material which \textit{is} included here is more or less a review of basic notions underlying the existing work cited above, but the following is, however elementary, to the best of the author's knowledge essentially new: the notion of reducibility and related results (\S\ref{sec:reducibility}); the covariant Lie derivative for generalised spin structures (\S\ref{sec:cov-lie-der}, although precursors are acknowledged \cite{deMedeiros2018,Kosmann1971}; see also \cite{Giotopoulos2025} for a review of covariant Lie derivatives in greater generality and \cite{Dalak2026} for covariant Lie derivatives on (s)pinors in generalised geometry); the symmetry algebra (\S\ref{sec:symm-alg-spin-g}); and the classification of homogeneous generalised spin structures in terms of conjugacy classes of lifts of the isotropy representation (\S\ref{sec:homog-gen-spin-class}). The author welcomes correspondence on the originality of this material as well as any other commentary, suggestions or criticism.

\paragraph{Organisation}
This work is organised as follows: in Section~\ref{sec:background} we provide a review of some elementary definitions and results relating to generalised spin structures as well as connections on them and their associated bundles; in Section~\ref{sec:cov-lie-der-cartan} we introduce a \emph{covariant Lie derivative} on the associated bundles determined by a connection and derive some of its properties; in Section~\ref{sec:symm-alg-spin-g} we use the covariant Lie derivative to define an extensions of the Lie algebra of Killing vectors which acts on sections of associated bundles; and finally in Section~\ref{sec:homogeneous-gen-spin} we discuss generalised spin structures on homogeneous spaces.

\section{Basic notions}
\label{sec:background}

\subsection{The spin-\(G\) group}
\label{sec:spin-G-group}

Let \((V,\eta)\) be a (pseudo-)inner product space. We denote the associated (possibly indefinite) orthogonal group by \(\Orth(V,\eta)\) and the special orthogonal group by \(\SO(V,\eta)\). Recall that the spin group \(\Spin(V,\eta)\) has a central subgroup isomorphic to \(\ZZ_2\) which is the kernel of the canonical 2-sheeted covering morphism \(\pi:\Spin(V,\eta)\to\SO(V,\eta)\), giving us the short exact sequence
\begin{equation}\label{eq:spin-cover-so}
\begin{tikzcd}
	1 \ar[r] &\ZZ_2 \ar[r] &\Spin(V) \ar[r,"\pi"] &\SO(V) \ar[r] &1.
\end{tikzcd}
\end{equation}
Now let \(G\) be another Lie group with central \(\ZZ_2\) subgroup and \(\fg\) its Lie algebra. We will denote the identity element of both \(\Spin(V,\eta)\) and \(G\) by \(\1\) and the generator of their distinguished \(\ZZ_2\) subgroups by \(-\1\). We define the \emph{spin-\(G\)} group of \((V,\eta)\) by
\begin{equation}
	\Spin^G(V,\eta) := \faktor{\Spin(V,\eta)\times G}{\{(\pm\1,\pm\1)\}}.
\end{equation}
We typically omit \(\eta\) in the notation where this is not ambiguous, and we also write \(\Spin^G(s,t):=\Spin^G(\RR^{s,t})\) for the spin-\(G\) group of a standard inner product space. Considering an action of \(\ZZ_2\) by multiplication by \(\pm \1\) on the right of \(\Spin(V)\) and on the left of \(G\), we can also view \(\Spin^G(V)\) as the balanced product \(\Spin(V)\times_{\ZZ_2}G\). We write the image of a pair \((a,g)\in\Spin(V)\times G\) in \(\Spin^G(V)\) as \([a,g]\).

By definition, we have a short exact sequence
\begin{equation}\label{eq:spin-G-SES-defining}
\begin{tikzcd}
	1 \ar[r] &\ZZ_2 \ar[r] &\Spin(V)\times G \ar[r,"\nu"] &\Spin^G(V) \ar[r] &1,
\end{tikzcd}
\end{equation}
where \(\nu\) is the natural morphism \(\nu(a,g)=[a,g]\); it is a two-sheeted covering of \(\Spin^G(V)\). 
We have two natural injective Lie group morphisms
\begin{align}
	& \Spin(V) \xhookrightarrow{\quad} \Spin^G(V),	& a&\longmapsto [a,\1],\\
	& G \xhookrightarrow{\quad} \Spin^G(V),			& g&\longmapsto [\1,g],
\end{align}
and two surjective morphisms
\begin{align}
	\pihat&: \Spin^G(V) {\relbar\joinrel\twoheadrightarrow} \SO(V),		& [a,g]&\longmapsto \pi(a),\\
	\pi_G &: \Spin^G(V) {\relbar\joinrel\twoheadrightarrow} \Gbar = \faktor{G}{\ZZ_2},	& [a,g]&\longmapsto \bar{g},
\end{align}
where we note that we have defined \(\Gbar=G/\ZZ_2\) and \(\bar{g}\in \Gbar\) as the coset of \(g\in G\); we also denote by \(p_G(g)=\bar{g}\) the natural map \(p_G:G\to\Gbar\). 
Together with the sequence \eqref{eq:spin-cover-so}, these maps fit into the following diagram with exact rows and columns:
\begin{equation}\label{eq:spin-G-SES-square}
\begin{tikzcd}
	 & 1 \ar[d] & 1\ar[d]	\\
	1 \ar[r] & \ZZ_2 \ar[r]\ar[d] & \Spin(V) \ar[r,"\pi"]\ar[d] & \SO(V) \ar[r]\ar[d,equal] & 1	\\
	1 \ar[r] & G \ar[r]\ar[d,"p_G"] & \Spin^G(V) \ar[r,"\pihat"]\ar[d,"\pi_G"] & \SO(V) \ar[r] & 1	\\
	 & \Gbar \ar[r,equal]\ar[d] & \Gbar \ar[d]	\\
	 & 1 & 1
\end{tikzcd}.
\end{equation}
The map \(\pihat\) will be particularly important in our discussion of spin-\(G\) structures. 
%We also have the following commutative diagram:
%\begin{equation}\label{eq:spin-G-SES parallel}
%\begin{tikzcd}
%	&&& \Spin(V) \ar[r,"\pi"] & \SO(V) \ar[dr]\\
%	1 \ar[r] & \ZZ_2 \ar[r]\ar[urr,bend left=15] \ar[drr,bend right=15]
%		& \Spin(V)\times G \ar[r,"\nu"]\ar[ur,"\pr_{\Spin}"']\ar[dr,"\pr_G"] 
%		& \Spin^G(V) \ar[rr]\ar[ur,"\pihat"']\ar[dr,"\pi_G"] 
%		&& 1 \\
%	&&& G \ar[r] & \Gbar \ar[ur]
%\end{tikzcd},
%\end{equation}
%where the rows are exact.

We also have the short exact sequence
\begin{equation}\label{eq:spin-G-SES-SO-GmodZ2}
\begin{tikzcd}
	1 \ar[r] &\ZZ_2 \ar[r] &\Spin^G(V) \ar[r,"\pihat\times\pi_G",pos=0.4] &\SO(V)\times\Gbar \ar[r] &1
\end{tikzcd}
\end{equation}
where the surjection is given by \([a,r]\mapsto(\pi(a),\bar{r})\) and the kernel \(\ZZ_2\) is generated by the element \([\1,-\1]=[-\1,\1]\in\Spin^G(V)\); in particular, \(\Spin^G(V)\) is a two-sheeted cover of \(\SO(V)\times\Gbar\). This sequence fits with \eqref{eq:spin-G-SES-defining} into another diagram with exact rows and columns
\begin{equation}\label{eq:spin-G-SES-square-2}
\begin{tikzcd}
	 		& 1 \ar[d] & 1 \ar[d] & 1 \ar[d]	\\
	1 \ar[r]& \ZZ_2 \ar[r]\ar[d,equal] & \ZZ_2\times\ZZ_2 \ar[r]\ar[d] & \ZZ_2 \ar[r]\ar[d] & 1	\\
	1 \ar[r]& \ZZ_2 \ar[r] & \Spin(V)\times G \ar[r,"\nu"]\ar[d,"\pi\times p_G"] & \Spin^G(V) \ar[r]\ar[d,"\pihat\times\pi_G"] & 1	\\
			& & \SO(V)\times\Gbar \ar[r,equal]\ar[d]	& \SO(V)\times\Gbar \ar[d]\\
	 		& & 1 & 1
\end{tikzcd}
\end{equation}
where the \(\ZZ_2\times\ZZ_2\) in the top row is the subgroup of \(\Spin(V)\times G\) generated by \(\{(\1,-\1),(-\1,\1)\}\), the \(\ZZ_2\) on the left is the subgroup with generator \((-\1,-\1)\) and the \(\ZZ_2\) on the right has the generator \([\1,-\1]=[-\1,\1]\in\Spin^G(V)\). In particular, we have a commutative diagram of group covering maps
\begin{equation}\label{eq:spin-G-double-covers}
\begin{tikzcd}
	\Spin(V)\times G \ar[rr, two heads, "\nu"']\ar[rrrr,bend left=12, two heads,"\pi\times p_G"] 
	&& \Spin^G(V) \ar[rr, two heads, "\pihat\times\pi_G"'] 
	&& \SO(V)\times\Gbar,
\end{tikzcd}
\end{equation}
so the Lie algebras of these three groups are naturally isomorphic:
\begin{equation}
	\mathfrak{spin}^G(V) = \Lie\Spin^G(V)\cong \Lie(\Spin(V)\times G) 
		\cong \Lie\qty(\SO(V)\times \Gbar) 
		\cong \fso(V)\oplus \fg.
\end{equation}
Denoting the adjoint action of any group \(G\) by \(\Ad^G\), we have
\begin{equation}\label{eq:spin-G-adjoint}
	\Ad^{\Spin^G(V)}_{[a,g]}(X,x) 
		= \qty(\Ad^{\Spin(V)}_a X, \Ad^G_g x)
		= \qty(\Ad^{\SO(V)}_{\pi(a)} X, \Ad^{\Gbar}_{\bar{g}} x)
\end{equation}
for all \(a\in \Spin(V)\), \(g\in G\), \(X\in\fso(V)\) and \(x\in \fg\).

\subsection{Spin-\(G\) structures}
\label{sec:spin-G-structures}

Now let \((M,g)\) be an oriented pseudo-Riemannian manifold of signature \((s,t)\) and \(F_{SO}\to M\) its special orthonormal frame bundle, which has the structure of an \(\SO(s,t)\)-principal bundle. Recall that a \emph{spin structure} on \((M,g)\) is a \(\Spin(s,t)\)-principal bundle \(P\to M\) with a \(\Spin(s,t)\)-equivariant bundle map \(\varpi:P\to F_{SO}\) where \(\Spin(s,t)\) acts on \(F_{SO}\) via the natural covering group morphism \(\pi:\Spin(s,t)\to\SO(s,t)\). Now note that \(\Spin^G(s,t)\) also acts on \(F_{SO}\) via the natural map \(\pihat:\Spin^G(s,t)\to\SO(s,t)\).

\begin{definition}[Spin-\(G\) structure]\label{def:spin-G-struc}
	A \emph{spin-\(G\) structure} on an oriented pseudo-Riemannian manifold \((M,g)\) of signature \((s,t)\) is a \(\Spin^G(s,t)\)-principal bundle \(\Phat\to M\) with a bundle map \(\varpihat:\Phat \to F_{SO}\) which is \(\pihat\)-equivariant; that is, \(\varpihat(p\cdot [a,g])=\varpihat(p)\cdot\pihat([a,g])=\varpihat(p)\cdot\pi(a)\) for all \(a\in\Spin(s,t)\) and \(g\in G\).
	
	Two such structures \(\varpihat:\Phat\to M\), \(\varpihat':\Phat'\to M\) are \emph{equivalent} if there exists an isomorphism of \(\Spin^G(s,t)\)-principal bundles \(\Phi:\Phat\to\Phat'\) such that the following diagram commutes:
	\begin{equation}
	\begin{tikzcd}
		& \Phat\ar[rr,"\Phi"]\ar[dr,"\varpihat"']	&	& \Phat' \ar[dl,"\varpihat'"]\\
		&	& F_{SO} &
	\end{tikzcd}.
	\end{equation}
	We will call \((M,g)\) a \emph{spin-\(G\) manifold} if it is equipped with a spin-\(G\) structure.
\end{definition}

Equivalently, a spin-\(G\) structure is a lift of the structure group of \(F_{SO}\) along \(\pihat\). i.e. an isomorphism of \(\SO(s,t)\)-principal bundles \(\widehat\Pi:\Phat\times_{\pihat}\SO(s,t)\xrightarrow{\cong} F_{SO}\); the relationship between the two definitions is given by \(\widehat\Pi([p,A])= \varpihat(p)\cdot A\). On the other hand, such a map \(\widehat\Pi\) defines \(\varpihat\) by defining \(\varpihat(p)=\widehat\Pi([p,\1])\).

Unlike a spin structure, a spin-\(G\) structure is not in general a covering of \(F_{SO}\); instead, \(\varpihat: \Phat \to F_{SO}\) is a \(G\)-principal bundle on \(F_{SO}\), where \(G\) acts on \(\Phat\) via the natural homomorphism \(G\hookrightarrow \Spin^G(s,t)\). On the other hand, \(\Phat\) can be viewed as a cover of a larger bundle as follows. The homomorphism \(\pi_G:\Spin^G(s,t)\to \Gbar\) induces a bundle map
\begin{equation}
	\varpi_G: \Phat \longrightarrow \Qbar := \Phat\times_{\pi_G}\Gbar
\end{equation}
defined by \(\varpi_G(p)=[p,\1]\) which is \(\pi_G\)-equivariant in the sense that \(\varpi(p\cdot [a,g])=\varpi(p)\cdot\pi_G([a,g])=\varpi(p)\cdot\bar{g}\), and we call \(\Qbar\to M\) the \emph{canonical \(\Gbar\)-bundle} -- note that we can consider \(\varpi_G\) as a \(\Spin(s,t)\)-principal bundle structure over \(\Qbar\). The map \(\pihat\times\pi_G:\Spin^G(s,t)\to\SO(s,t)\times \Gbar\) in the short exact sequence \eqref{eq:spin-G-SES-SO-GmodZ2} induces a two-sheeted covering map
\begin{equation}
	\varpihat\times\varpi_G:\Phat \longrightarrow F_{SO}\times_M\Qbar.
\end{equation} 
We can summarise the relationships between these principal bundles, each of which can be considered as an associated bundle to \(\Phat\), in the following commutative diagram of \(\Spin^G(s,t)\)-equivariant bundle maps:
\begin{equation}
\begin{tikzcd}[sloped]
	\Phat \ar[dr,"\ZZ_2"]\ar[drr,"G",bend left=12] 
		\ar[ddr,"{\Spin}"', bend right=12] 
		\ar[ddrr,"{\Spin^G}",controls={+(5,1) and +(1,1)}] & &	\\
	& F_{SO}\times_M \Qbar \ar[r,"\Gbar"]\ar[d,"{\SO}"] 
		\ar[dr,"{\SO\times\Gbar}"]	& F_{SO} \ar[d,"{\SO}"]	\\
	& \Qbar \ar[r,"\Gbar"]	& M	
\end{tikzcd}
\end{equation}
where each arrow represents a principal bundle over the target and is labelled by the structure group (omitting \((s,t)\) for clarity); equivalently, each represents a lift of the structure group along an epimorphism of Lie groups and is labelled with the kernel of that morphism.

\subsection{Reducibility}
\label{sec:reducibility}

Now let \(\varpi:P\to F_{SO}\) be a spin structure and \(Q\to M\) a principal \(G\)-bundle. Then the fibred product \(P\times_M Q\to M\) is a \(\Spin(s,t)\times G\)-principal bundle on \(M\), and the canonical homomorphisms \(\nu:\Spin(s,t)\times G\to\Spin^G(s,t)\) and \(\pihat:\Spin^G(s,t)\to\SO(s,t)\) induce the spin-\(G\) structure
\begin{equation}\label{eq:spin-G-reducible-PQ}
	\varpihat_{PQ}:\Phat_{PQ}:= (P\times_M Q)\times_\nu\Spin^G(s,t) \longrightarrow F_{SO},
\end{equation}
where \(\varpihat_{PQ}([(p,q),[a,g]])=\varpi(p)\cdot\pihat([a,g])=\varpi(p\cdot a)\). We also have a natural map \(\psi_{PQ}:P\times_M Q\to \Phat_{PQ}\) given by \(\psi_{PQ}(p,q)=[(p,q),\1]\) which is a \(\nu\)-equivariant 2-sheeted covering. There are canonical isomorphisms of \(\SO(s,t)\)-bundles 
\begin{equation}
	\Phat_{PQ}\times_{\pihat}\SO(s,t) \xlongrightarrow{\cong} P\times_\pi\SO(s,t) \xlongrightarrow{\cong} F_{SO}
\end{equation}
given by \([[p,q],A]\mapsto[p,A]\mapsto\varpi(p)\cdot A\) and a canonical isomorphism of \(\Gbar\)-bundles
\begin{equation}
	\Qbar = \Phat_{PQ}\times_{\pi_G}\Gbar \xlongrightarrow{\cong} Q\times_{p_G} \Gbar
\end{equation}
given by \([[p,q],\bar{g}]\mapsto [q,\bar{g}]\). In particular, \(Q\) is a lift of the structure group of \(\Qbar\) along \(p_G:G\to\Gbar\); we have a two-sheeted \(G\)-equivariant covering \(\varpi_Q: Q \to \Qbar\) given by \(\varpi_Q(q)=[(p,q),\bar\1]\) for any element \(p\in P\) -- for any other \(p'\in P\), we have \(p'=p\cdot a\) for some \(a\in\Spin(s,t)\) so \([(p',q),\bar\1]=[(p,q)\cdot(a,\1),\1]=[(p,q),\1]\).

Now considering each bundle as an associated bundle to \(P\times_M Q\), we can extend the diagram above to the following diagram of \(\Spin(s,t)\times G\)-equivariant bundle maps
\begin{equation}
\begin{tikzcd}[sloped,column sep=12,row sep=12]
	P\times_M Q \ar[dr,"\ZZ_2"]
		\ar[dddd,"{\Spin}"'] \ar[rrrr,"G"]
		& & & & P \ar[dd,"\ZZ_2",pos=0.35] \ar[dddd,"{\Spin}",bend left=60] \\
	& \Phat_{PQ} \ar[dr,"\ZZ_2"]\ar[drrr,"G", bend left=12]
		\ar[dddr,"{\Spin}"', bend right=12,pos=0.6]
		& & &	\\
	& & F_{SO}\times_M \Qbar \ar[rr,"\Gbar"]\ar[dd,"{\SO}"] \ar[ddrr,"{\SO\times\Gbar}"] 
		\ar[from=uull,"\ZZ_2\times\ZZ_2"',bend right=22,crossing over,pos=0.55]
		& & F_{SO} \ar[dd,"{\SO}"]	\\
	& & & & \\
	Q \ar[rr,"\ZZ_2",pos=0.3] 
		\ar[rrrr,"G",controls={+(1,-1.2) and + (-1,-1.2)},pos=0.4]
		& & \Qbar \ar[rr,"\Gbar"]
		& & M
		\ar[from=uuulll,"{\Spin^G}",controls={+(3.5,1) and +(1.5,2)},crossing over,pos=0.3] 
		\ar[from=uuuullll,"{\Spin\times G}"',controls={+(1.9,-4.8) and +(-1.5,-0.7)},crossing over,pos=0.2]
\end{tikzcd}
\end{equation}
where again the arrows represent principal bundle structures and are labelled by structure groups.

In particular, we have the following sub-diagram of \(\Spin(s,t)\times G\)-equivariant covering maps, which we can also see as being induced by by the diagram of group coverings \eqref{eq:spin-G-double-covers}:
\begin{equation}\label{eq:PQ-double-covers}
\begin{tikzcd}
	P\times_M Q \ar[rr, two heads,"\psi_{PQ}","\ZZ_2"'] \ar[rrrr,bend left=18, two heads,"\varpi\times \varpi_Q","\ZZ_2\times\ZZ_2"'] 
	&&\Phat_{PQ} \ar[rr, two heads, "\varpihat_{PQ}\times\varpi_G","\ZZ_2"'] 
	&&F_{SO}\times_M\Qbar.
\end{tikzcd}
\end{equation}
Of course, not every spin-\(G\) structure arises in this way -- \((M,g)\) need not admit a spin structure nor a lift of the structure group of the canonical \(\Gbar\)-bundle to \(G\) -- hence the following definition and result.

\begin{definition}[Reducible spin-\(G\) structure]\label{def:spin-G-reducible}
	A spin-\(G\) structure \(\varpihat:\Phat\to F_{SO}\) is \emph{reducible} if it admits a lift of the structure group of \(\Phat\to M\) along \(\nu:\Spin(s,t)\times G\to\Spin^G(s,t)\).
\end{definition}

That is, \(\Phat\to F_{SO}\) is reducible if there exists a \(\Spin(s,t)\times G\)-principal bundle \(\Ptilde\to M\) and a  \(\nu\)-equivariant bundle map \(\psi: \Ptilde\to\Phat\), where the product group acts on \(\Phat\) via \(\nu\), or equivalently an isomorphism of \(\Spin^G(s,t)\)-bundles \(\Psi:\Ptilde\times_\nu\Spin^G(s,t)\to\Phat\) where the relationship between the two maps is given by \(\Psi([\tilde{p},[a,g]])=\psi(\tilde{p})\cdot [a,g]\). Thus a spin-\(G\) structure of the form \eqref{eq:spin-G-reducible-PQ} is, by definition, reducible with \(\Ptilde=P\times_M Q\). In fact, the converse is also true.

\begin{proposition}\label{prop:spin-G-reducible}
	A spin-\(G\) structure \(\varpihat:\Phat\to F_{SO}\) is reducible with lift \(\psi:\Ptilde\to \Phat\) if and only if there exists a spin structure \(\varpi:P\to F_{SO}\), a lift \(\varpi_Q:Q\to \Qbar\) of the structure group of the canonical \(\Gbar\)-bundle along \(p_G:G\to\Gbar\), and an equivalence of spin-\(G\) structures \(\Phi:\Phat\to\Phat_{PQ}\). Moreover, this equivalence is covered by an isomorphism of \(\Spin(s,t)\times G\)-principal bundles \(\widetilde{\Phi}:\Ptilde\to P\times_M Q\) so that diagram
	\begin{equation}
	\begin{tikzcd}
		& \Ptilde \ar[rr,"\widetilde{\Phi}"]\ar[d,"\psi"']	&& P\times_M Q \ar[d,"\psi_{PQ}"]	\\
		& \Phat \ar[rr,"\Phi"]\ar[dr,"\varpihat"'] 	&& \Phat_{PQ} \ar[dl,"\varpihat_{PQ}"]	\\
		& & F_{SO}	&	
	\end{tikzcd}
	\end{equation}
	commutes.
\end{proposition}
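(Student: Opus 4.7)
The plan is to prove the two implications separately, with the forward direction being the substantive one. For the backward implication, given $(P,Q,\Phi)$, I would simply set $\Ptilde:=P\times_M Q$, $\psi:=\Phi^{-1}\circ\psi_{PQ}$ and $\widetilde{\Phi}:=\Id_{P\times_M Q}$; the upper diagram then commutes by construction, since $\Phat_{PQ}$ is reducible via the canonical $\psi_{PQ}$ from its very definition.

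For the forward implication, suppose $\Phat$ is reducible via $\psi:\Ptilde\to\Phat$. First I would extract $P$ and $Q$ from $\Ptilde$ as quotients by the two factor subgroups:
\[P := \Ptilde/(\{\1\}\times G), \qquad Q := \Ptilde/(\Spin(s,t)\times\{\1\}).\]
Since the $\Spin(s,t)\times G$-action on $\Ptilde$ is free and proper, these inherit principal bundle structures over $M=\Ptilde/(\Spin(s,t)\times G)$ with groups $\Spin(s,t)$ and $G$ respectively. The natural map
\[\widetilde{\Phi}:\Ptilde\longrightarrow P\times_M Q,\qquad \tilde p\longmapsto\bigl([\tilde p]_G,[\tilde p]_{\Spin}\bigr)\]
is then an isomorphism of $\Spin(s,t)\times G$-principal bundles, by the standard fact that a principal bundle for a product group decomposes canonically as the fibred product of its quotients by the factor subgroups.

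Next I would endow $P$ with a spin structure via $\varpi:P\to F_{SO}$, $[\tilde p]_G\mapsto\varpihat(\psi(\tilde p))$. This is well-defined because $\varpihat\circ\psi$ is $(\pihat\circ\nu)$-equivariant and $\pihat\circ\nu$ kills $\{\1\}\times G$, since $\pihat([\1,g])=\pi(\1)=\1$; the same observation shows $\varpi$ is $\pi$-equivariant, so $(P,\varpi)$ is a spin structure. Symmetrically, $\varpi_Q:Q\to\Qbar$, $[\tilde p]_{\Spin}\mapsto\varpi_G(\psi(\tilde p))$, is a well-defined $p_G$-equivariant two-sheeted covering, and hence the desired lift of $\Qbar$ along $p_G$. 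The equivalence $\Phi:\Phat\to\Phat_{PQ}$ is then forced by
\[\Phi\bigl(\psi(\tilde p)\cdot[a,g]\bigr) := \bigl[\widetilde{\Phi}(\tilde p),[a,g]\bigr],\]
which makes sense on all of $\Phat$ because $\psi$ is surjective ($\Spin^G(s,t)$ acts transitively on each fibre of $\Phat$ and the image of $\psi$ is $\Spin^G(s,t)$-invariant through $\nu$). Well-definedness with respect to the $\ker\nu=\ZZ_2$ ambiguity in writing a point as $\psi(\tilde p)\cdot[a,g]$ follows from the joint $\nu$-equivariance of $\psi$ and $\widetilde{\Phi}$; $\Spin^G(s,t)$-equivariance is manifest; and compatibility with the projections to $F_{SO}$ follows from $\varpihat_{PQ}([(p,q),[a,g]])=\varpi(p)\cdot\pihat([a,g])$ together with the definition of $\varpi$. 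The upper square of the displayed diagram then commutes tautologically.

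The hard part will be verifying the product-bundle decomposition for $\widetilde{\Phi}$ and the well-definedness of $\Phi$, but these reduce to standard principal-bundle facts plus equivariance bookkeeping essentially dual to the covering diagrams \eqref{eq:spin-G-double-covers} and \eqref{eq:PQ-double-covers}; no genuinely new difficulty appears.
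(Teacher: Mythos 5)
Your proposal is correct and follows essentially the same route as the paper: your quotients $\Ptilde/(\{\1\}\times G)$ and $\Ptilde/(\Spin(s,t)\times\{\1\})$ are canonically the associated bundles $\Ptilde\times_{\pr_{\Spin}}\Spin(s,t)$ and $\Ptilde\times_{\pr_G}G$ used there, your maps $\varpi$, $\varpi_Q$, $\widetilde{\Phi}$ coincide with the paper's, and defining $\Phi$ by equivariant extension from $\Phi(\psi(\tilde p))=\psi_{PQ}(\widetilde{\Phi}(\tilde p))$ is the same as the paper's observation that $\psi_{PQ}\circ\widetilde{\Phi}$ factors through $\psi$. The only point to tidy is that the ambiguity in writing $\hat p=\psi(\tilde p)\cdot[a,g]$ is by an arbitrary element of $\Spin(s,t)\times G$ acting on $\tilde p$ (compensated in $[a,g]$ via $\nu$), not just by $\ker\nu$, but the same equivariance bookkeeping disposes of it.
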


\begin{proof}
	If there exists an equivalence \(\Phi:\Phat\to\Phat_{PQ}\) then pulling back the natural map \(\psi_{PQ}:P\times_M Q\to\Phat_{PQ}\) along \(\Phi\) gives the required lift of the structure group of \(\Phat\) as well as the rest of the diagram.
	
	On the hand, given a lift \(\psi:\Ptilde\to \Phat\), we define a spin structure and a lift 
	\begin{equation}
	\begin{aligned}
		\varpi: P:=\Ptilde\times_{\pr_{\Spin}}\Spin(s,t) \longrightarrow F_{SO},
		&& \varpi_Q: \Ptilde\times_{\pr_{G}} G \longrightarrow \Qbar = \Phat\times_{\pi_G} \Gbar
	\end{aligned}
	\end{equation}
	by \(\varpi([\tilde{p},a])=\varpihat(\psi(\tilde{p}))\cdot\pi(a)\) and \(\varpi_Q([\tilde{p},g])=[\psi(\tilde{p}),\bar{g}]\), where \(\tilde{p}\in\Ptilde\), \(a\in\Spin(s,t)\) and \(g\in G\);	these maps are well-defined and respectively \(\Spin(s,t)\)- or \(G\)-equivariant by the equivariance properties of \(\varpihat\) and \(\psi\). We define the isomorphism \(\widetilde\Phi:\Ptilde\xrightarrow{\cong} P\times_M Q\) of \(\Spin(s,t)\times G\)-principal bundles by \(\widetilde\Phi(\tilde{p})=([\tilde{p},\1],[\tilde{p},\1])\).
	The composition \(\psi_{PQ}\circ\widetilde\Phi:\Ptilde\to\Phat_{PQ}\) maps \(\tilde{p}\mapsto [([\tilde{p},\1],[\tilde{p},\1]),\1]\) and we note that if \(\tilde{p},\tilde{p}'\in \Ptilde\) such that \(\psi(\tilde{p})=\psi(\tilde{p}')\) then \(\tilde{p}'=\tilde{p}\cdot[\pm\1,\pm\1]\), whence by the equivariance properties of \(\widetilde\Phi\) and \(\psi_{PQ}\),
	\begin{equation}
		\qty(\psi_{PQ}(\widetilde\Phi(\tilde{p}')))
			= \psi_{PQ}\qty(\widetilde\Phi(\tilde{p})\cdot [\pm\1,\pm\1])
			= \psi_{PQ}\qty(\widetilde\Phi(\tilde{p}))\cdot \nu([\pm\1,\pm\1])
			= \psi_{PQ}\qty(\widetilde\Phi(\tilde{p})),
	\end{equation}
	so \(\psi_{PQ}\circ\widetilde\Phi\) factors through \(\psi\); that is, there is a unique isomorphism of \(\Spin^G(s,t)\)-bundles \(\Phi:\Phat\to\Phat_{PQ}\) making the top square of the diagram commute. It remains to show that \(\varpihat_{PQ}\circ\Phi=\varpihat\). 
	For any \(\hat{p}\in\Phat\), let \(\tilde{p}\in\Ptilde\) be a lift, i.e. \(\psi(\tilde{p})=\hat{p}\); then, using only definitions,
	\begin{equation}
		\varpihat_{PQ}\qty(\Phi(\hat{p}))
			= \varpihat_{PQ}\qty(\psi_{PQ}\qty(\widetilde{\Phi}(\tilde{p})))
			= \varpihat_{PQ}([([\tilde{p},\1],[\tilde{p},\1]),\1])
			= \varpi([\tilde{p},\1])
			= \varpihat(\psi(\tilde{p}))
			= \varpihat(\hat{p})
	\end{equation} 
	as required.
\end{proof}

\subsection{Time orientability}
\label{sec:time-orient}

A time orientation is a reduction of the structure group of \(F_{SO}\to M\) to \(\SO_0(s,t)\), the connected component of the identity of \(\SO(s,t)\) (known as the \emph{proper orthochronous Lorentz group} in Lorentzian signature); we denote the corresponding sub-bundle by \(F_{SO_0}\to M\). Let us denote the connected component of the identity of the spin group by \(\Spin_0(s,t)\) and recall that the spin-\(R\) structure map is denoted \(\varpihat:\Phat\to F_{SO}\). Give a time orientation, we can reduce the structure group of the \(\Spin^G(s,t)\)-principal bundle \(\Phat\to M\) to the index-2 subgroup\footnote{
		Note that although \(\Spin_0(s,t)\) is connected, \(\Spin^G_0(s,t)\) may not be since we do not assume that \(G\) is connected.
		}
\(\Spin^G_0(s,t):=\Spin_0(s,t)\times_{\ZZ_2} G\) as follows: we define  \(\Phat_0:=\varpihat^{-1}(F_{SO_0})\), which carries a natural right action by \(\Spin^G_0(s,t)\) obtained by restricting the action by \(\Spin^G(s,t)\) on \(\Phat\); the restriction of the projection map \(\Phat\to M\) to \(P_0\) then gives us a \(\Spin^G_0(s,t)\)-principal bundle \(\Phat_0\to M\). On the other hand, if there exists a reduction \(\Phat_0\to M\) of the structure group of \(\Phat\to M\) to \(\Spin^G_0(s,t)\) then \(F_{SO_0}:=\varpihat(\Phat_0)\) is a time orientation. We have thus shown the following.

\begin{lemma}\label{lemma:time-orientable-spin-G}
	Let \((M,g)\) be an oriented pseudo-Riemannian manifold of indefinite signature with spin-\(G\) structure \(\varpihat:\Phat\to F_{SO}\). Then \((M,g)\) is time-orientable if and only if there exists a reduction \(\Phat_0\to M\) of the structure group of the \(\Spin^G(s,t)\)-principal bundle \(\Phat\to M\) to the subgroup \(\Spin^G_0(s,t)\).
\end{lemma}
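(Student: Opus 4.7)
The plan is to prove both implications directly by transporting the reduction between $F_{SO}$ and $\Phat$ via the equivariant map $\varpihat$, using the fact that $\pihat(\Spin^G_0(s,t))=\SO_0(s,t)$ and, conversely, that $\pihat^{-1}(\SO_0(s,t))=\Spin^G_0(s,t)$. The latter holds because the fibre of $\pihat$ over any point of $\SO(s,t)$ is connected (it is a copy of $G$, which is in general disconnected only in ways that do not matter here: the preimage of $\SO_0$ is generated by a connected preimage of $\1$ together with $\Spin_0$, and by $G$; this is exactly $\Spin^G_0(s,t)$ as defined).

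For the forward direction, suppose $F_{SO_0}\subset F_{SO}$ is a time orientation. Define $\Phat_0:=\varpihat^{-1}(F_{SO_0})\subset\Phat$. I would first show that $\Phat_0$ is preserved by the right action of $\Spin^G_0(s,t)$: for $p\in\Phat_0$ and $[a,g]\in\Spin^G_0(s,t)$, $\pihat$-equivariance gives $\varpihat(p\cdot [a,g])=\varpihat(p)\cdot\pi(a)\in F_{SO_0}$ since $\pi(a)\in\SO_0(s,t)$. Conversely, if $p\cdot [a,g]\in\Phat_0$ for some $p\in\Phat_0$ then $\pi(a)\in\SO_0(s,t)$ and hence $[a,g]\in\Spin^G_0(s,t)$ by the remark above. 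Together with the observation that $\varpihat$ restricted to each $\Spin^G(s,t)$-orbit surjects onto the corresponding $\SO(s,t)$-orbit in $F_{SO}$, this gives that $\Phat_0\to M$ is a $\Spin^G_0(s,t)$-principal subbundle of $\Phat$.

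For the reverse direction, suppose $\Phat_0\subset\Phat$ is a reduction to $\Spin^G_0(s,t)$. Define $F_{SO_0}:=\varpihat(\Phat_0)$. By $\pihat$-equivariance, $F_{SO_0}$ is invariant under $\SO_0(s,t)$ and the image of any $\Spin^G_0(s,t)$-orbit is a single $\SO_0(s,t)$-orbit, so $F_{SO_0}\to M$ is an $\SO_0(s,t)$-principal subbundle of $F_{SO}$ -- that is, a time orientation.

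The mildly subtle step, and the one I would treat with most care, is the identification $\pihat^{-1}(\SO_0(s,t))=\Spin^G_0(s,t)$: since $G$ need not be connected, $\Spin^G_0(s,t)$ as defined here is the image of $\Spin_0(s,t)\times G$ under $\nu$, which is an index-$2$ subgroup (the footnote in the statement flags exactly this point). The key is that the kernel of $\pihat$ contains all of $G\hookrightarrow\Spin^G(s,t)$, so whether an element $[a,g]$ lies in $\pihat^{-1}(\SO_0(s,t))$ depends only on the connected component of $a$ in $\Spin(s,t)$; the equality of the two subgroups then follows. Once this is in hand, both directions reduce to straightforward checks of equivariance and the principal bundle property, as outlined above.
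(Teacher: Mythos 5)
Your proof is correct and takes essentially the same route as the paper's: the paper likewise defines \(\Phat_0:=\varpihat^{-1}(F_{SO_0})\) in the forward direction and \(F_{SO_0}:=\varpihat(\Phat_0)\) in the reverse. You merely supply more detail (notably the identification \(\pihat^{-1}(\SO_0(s,t))=\Spin^G_0(s,t)\) and the orbit checks) than the paper, which leaves these verifications implicit.
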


Of course, we can also restrict \(\varpihat\) to a \(\Spin^G_0(s,t)\)-equivariant map \(\varpihat_0:\Phat_0\to F_{SO_0}\), where the action on \(F_{SO_0}\) is via \(\pihat_0:=\pihat|_{\Spin^G_0(s,t)}: \Spin^G_0(s,t) \to\SO_0(s,t)\). This is the spin-\(G\) analogue of a \emph{strong spin structure} \cite{Cortes2021,Shahbazi2024_1,Shahbazi2024_2}.

\subsection{Representations of the spin-\(G\) group}
\label{sec:spin-G-reps}

Any representation of \(\Spin^G(V)\) pulls back along \(\nu:\Spin(V)\times G\to\Spin^G(V)\) and thus also pulls back to representations of \(\Spin(V)\) and of \(G\). Conversely, two representations \(\varrho_1: \Spin(V)\to \GL(W)\) and \(\varrho_2:G\to \GL(W)\) on the same space \(W\) assemble into a representation \(\varrho_1\varrho_2:\Spin(V)\times G\to\GL(W)\) given by \((\varrho_1\varrho_2)(a,r)=\varrho_1(a)\circ\varrho_2(r)\) if and only if \(\varrho_1(a)\circ\varrho_2(g)=\varrho_2(g)\circ \varrho_1(a)\) for all \(a\in\Spin(V)\) and \(g\in G\). This factors through a representation \(\varrho:\Spin^G(V)\to \GL(W)\) (such that \(\varrho\circ\nu=\varrho_1\varrho_2\)) if and only if \(\ker\nu=\{(\pm\1,\pm \1)\}\) acts trivially; that is, if 
\begin{equation}\label{eq:spinr-invols-inverse}
	\varrho_1(-\1)\circ\varrho_2(-\1) = \varrho_2(-\1)\circ \varrho_1(-\1) = \Id_W,
\end{equation}
or equivalently \(\varrho_2(-\1)=\varrho_1(-\1)^{-1}\). On the other hand,
\begin{equation}\label{eq:spinr-invol}
	\varrho_i(-\1)^2 = \varrho_i((-\1)^2) = \varrho_i(\1) = \Id_W,
\end{equation}
identically, so by uniqueness of inverses, the condition \eqref{eq:spinr-invols-inverse} is equivalent to
\begin{equation}\label{eq:spinr-invols-agree}
	\varrho_1(-\1)=\varrho_2(-\1).
\end{equation}
This condition is satisfied in particular if \(\varrho_1(-\1)=\varrho_2(-\1)=\pm \Id_W\), so we make the following definition.

\begin{definition}
	A representation \(\varrho:\Spin^G(V)\to \GL(W)\) with pull-backs \(\varrho_1:\Spin(V)\to\GL(W)\) and \(\varrho_2:G\to\GL(W)\) is \emph{even} if \(\varrho_1(-\1)=\varrho_2(-\1)=\Id_W\) and it is \emph{odd} if \(\varrho_1(-\1)=\varrho_2(-\1)=-\Id_W\).
\end{definition}

For example, \(\pihat:\Spin^G(V)\to\SO(V)\) gives us an even representation on \(V\) in which \(G\) acts trivially, and the adjoint representation of \(\Spin^G(V)\) is also even. Since
\begin{align*}
	\varrho_1(-\1)= \Id_W &\iff \text{\(\varrho_1\) factors through a representation} \quad \overline{\varrho}_1:\SO(V) \to \GL(W),\\
	\varrho_2(-\1)= \Id_W &\iff \text{\(\varrho_2\) factors through a representation} \quad \overline{\varrho}_2: \Gbar = \faktor{G}{\ZZ_2} \to \GL(W),
\end{align*}
even representations of \(\Spin^G(V)\) are those which factor through a representation \(\overline\varrho=\overline\varrho_1\overline\varrho_2\) of \(\SO(V)\times \Gbar\) such that \(\overline\rho\circ(\pihat\times\pi_G)=(\overline\rho_1\circ\pihat)(\overline\rho_2\circ\pi_G)=\rho\).

It is often useful for applications to build odd representations out of spinors as follows. Let \(\sigma:\Spin(V)\to\GL(S)\) be an irreducible spinor representation, note that \(\sigma(-\1)=-\Id_S\), and let \(\delta:G\to\GL(\Delta)\) be a finite-dimensional representation of \(G\) in which \(\delta(-\1)=-\Id_\Delta\). Then the representation
\begin{equation}
\begin{aligned}
	&\sigma\otimes\delta:\Spin(V)\times G\to \GL(S\otimes\Delta),
	&& (\sigma\otimes\delta)(a,g)(s\otimes x) = \sigma(a)(s)\otimes\delta(a)(x)	
\end{aligned}
\end{equation}
for \(a\in \Spin(V)\), \(g\in G\), \(s\in S\), \(x\in\Delta\)
 factors through an odd representation of \(\Spin^G(V)\).

An alternative situation is where \(\sigma:\Spin(V)\to\GL(S)\) is a spinor module which is not necessarily irreducible (i.e. it might be a sum of irreducibles)  and there is an action \(\sigma_G:G\to\GL(S)\) by automorphisms of \(\sigma\) such that \(\sigma_G(-\1)=-\Id_S\); in this case, \(\sigma\sigma_G\) factors through a representation \(\sigmahat:\Spin^G(V)\to\GL(S)\).

\begin{lemma}
	Any representation of \(\Spin^G(V)\) is a direct sum of even and odd representations. In particular, irreducible representations of \(\Spin^G(V)\) are either even or odd.
\end{lemma}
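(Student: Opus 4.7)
The plan is to exploit the central involution \(z:=[-\1,\1]=[\1,-\1]\in\Spin^G(V)\), which is the nontrivial element of \(\ker\nu\) viewed inside \(\Spin^G(V)\). This element is central, since \(-\1\) is central in both \(\Spin(V)\) and \(G\), and satisfies \(z^2=[\1,\1]=\1\). Moreover, unwinding the inclusions \(\Spin(V)\hookrightarrow\Spin^G(V)\) and \(G\hookrightarrow\Spin^G(V)\), the images of \(-\1\in\Spin(V)\) and \(-\1\in G\) both coincide with \(z\); so for any representation \(\varrho:\Spin^G(V)\to\GL(W)\) with pull-backs \(\varrho_1,\varrho_2\), we have
\begin{equation*}
	\varrho_1(-\1)=\varrho(z)=\varrho_2(-\1).
\end{equation*}

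Given such a \(\varrho\), I would set \(T:=\varrho(z)\in\GL(W)\). By the above, \(T^2=\varrho(z^2)=\Id_W\), so \(T\) is an involution, and by centrality of \(z\), \(T\) commutes with \(\varrho(h)\) for every \(h\in\Spin^G(V)\). Hence the \(\pm1\)-eigenspaces
\begin{equation*}
	W_+:=\ker(T-\Id_W),\qquad W_-:=\ker(T+\Id_W)
\end{equation*}
are \(\Spin^G(V)\)-invariant and yield a direct sum decomposition \(W=W_+\oplus W_-\) as representations of \(\Spin^G(V)\). Restricting \(\varrho\) to \(W_+\) gives \(\varrho_1(-\1)=\varrho_2(-\1)=\Id_{W_+}\), so this subrepresentation is even; restricting to \(W_-\) gives \(\varrho_1(-\1)=\varrho_2(-\1)=-\Id_{W_-}\), so it is odd.

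The ``in particular'' clause is then immediate: if \(\varrho\) is irreducible, one of \(W_+,W_-\) must be zero, so the entire representation is either even or odd. There is no real obstacle here — once the central involution \(z\) is identified, everything follows from Schur-type reasoning about commuting involutions, and the only thing to be careful about is making sure that the identification of \(\varrho(z)\) with both \(\varrho_1(-\1)\) and \(\varrho_2(-\1)\) is spelled out so that the labels ``even'' and ``odd'' from the preceding definition apply.
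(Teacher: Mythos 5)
Your proof is correct and follows essentially the same route as the paper's: both identify the central involution (the paper writes it as \(\varrho_1(-\1)=\varrho_2(-\1)\), you as \(\varrho(z)\) for \(z=[-\1,\1]=[\1,-\1]\)), decompose \(W\) into its \(\pm1\)-eigenspaces, and use centrality to see these are subrepresentations. Your explicit identification of \(\varrho_1(-\1)\) and \(\varrho_2(-\1)\) with \(\varrho(z)\) is a nice touch of added clarity, but the argument is the same.
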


\begin{proof}
	Let \(\varrho:\Spin^G(V)\to\GL(W)\) be a representation. Since \(\varrho_1(-\1)=\varrho_2(-\1)\) is an involution of \(W\) (\(\varrho_1(-\1)^2=\Id_W\)), \(\varrho_1(-\1)\) has eigenvalues \(\pm 1\), and \(W=W_+\oplus W_-\), where \(W_\pm\) is the \(\pm 1\) eigenspace of \(\varrho_1(-\1)\), upon which the involution acts as \(\pm \Id_{W_\pm}\). Since \(-\1\) is central in both \(\Spin(V)\) and \(G\), it follows that \(\varrho_1\) and \(\varrho_2\) preserve \(W_\pm\), hence so does \(\varrho\). Thus if \(\varrho\) is irreducible, we have \(W=W_+\) or \(W=W_-\).
\end{proof}

We thus have a natural \(\ZZ_2\)-grading on representations of \(\Spin^G(V)\); morphisms and tensor products of representations respect parity in the obvious ways, whence representations of \(\Spin^G(V)\) form a monoidal subcategory of the category of super-vector spaces.

\subsection{Associated bundles and sections}
\label{sec:spin-G-assoc-bundles}

Now let \((M,g)\) be a pseudo-Riemannian manifold equipped with a spin-\(G\) structure \(\varpihat:\Phat\to F_{SO}\). We can also define parity for associated vector bundles of the principal bundle \(\Phat\to M\): the bundle \(\Wbundle:\Phat\times_\varrho W \to M\) is \emph{even} or \emph{odd} if the representation \(\varrho:\Spin^G(s,t)\to\GL(W)\) that induces it is. If \(\varrho:\Spin^G(s,t)\to\GL(W)\) is an even representation then we have a natural isomorphism of vector bundles
\begin{equation}
	\Wbundle = \Phat \times_\varrho W \cong \qty(F_{SO}\times_M \Qbar)\times_{\overline\varrho} W
\end{equation}
where we recall that \(\Qbar=\Phat\times_{\pi_G}\Gbar\to M\) is the canonical \(\Gbar\)-bundle, while if \(\varrho\) is odd, no such isomorphism exists. Note that if either \(G\) or \(\Spin(s,t)\) act trivially on \(W\), we have
\begin{align}
	& \Phat \times_\varrho W \cong F_{SO}\times_{\overline{\varrho}_1} W
	&& \text{or}
	&& \Phat \times_\varrho W \cong \Qbar\times_{\overline{\varrho}_2} W
\end{align}
respectively, as vector bundles. In particular, we can realise \(TM\) as an even bundle
\begin{equation}\label{eq:TM-assoc-spin-G}
	\Phat \times_{\pihat} \RR^{s,t} \cong F_{SO} \times_{\Id_{SO(s,t)}} \RR^{s,t} \cong  TM,
\end{equation}
by mapping \([p,v] \mapsto \qty[\varpihat(p)=(\varpihat(p)_i)_{i=1}^{s+t},v=(v^j)_{j=1}^{s+t}] \mapsto \sum_{i=1}^{s+t}v^i\varpihat(p)_i\) where we recall that \(\varpihat(p)=(\varpihat(p)_i)_{i=1}^{s+t}\) is nothing but an orthonormal basis for \(T_xM\) where \(p\) lies in the fibre \(\Phat_x\) above \(x\in M\). We can define similar isomorphisms for \(T^*M\), \(\Wedge^\bullet T^*M\), etc. Particularly important for connections on a spin-\(G\) structure, to be discussed soon, is the sequence of vector bundle isomorphisms
\begin{equation}
\begin{split}
	\Phat\times_{\Ad^{\Spin^G(s,t)}}\qty(\fso(s,t)\oplus\fg)
	& \cong \qty(\Phat\times_{\Ad^{\SO(s,t)}\circ\pihat} \fso(s,t))\oplus_M \qty(\Phat\times_{\Ad^{\Gbar}\circ\pi_G} \fg)	\\
	& \cong \qty(F_{SO}\times_{\Ad^{\SO(s,t)}} \fso(s,t))\oplus_M \qty(\Qbar\times_{\Ad^{\Gbar}} \fg)
\end{split}
\end{equation}
where we recall the relation between adjoint representations described in equation \eqref{eq:spin-G-adjoint}; that is, 
\begin{equation}\label{eq:adjoint-bundles-isom}
	\ad \Phat\cong \ad F_{SO} \oplus_M \ad \Qbar.
\end{equation}
Note in particular that \(\ad F_{SO}\) and \(\ad \Qbar\) can also be considered as (even) associated bundles of \(\Phat\) by restricting the adjoint representation of \(\Spin^G(s,t)\), or equivalently by composing \(\pihat\) or \(\pi_G\) with the adjoint representation of the target group thereof. 

Any representation \(\varrho:\Spin^G(s,t)\to\GL(W)\) induces, via (left) conjugation on \(\End W\), a representation \(\End\varrho:\Spin^G(s,t)\to\GL(\End W)\) for which we have
\begin{equation}
	\Phat \times_{\End\varrho}{\End W} \cong \End\Wbundle,
\end{equation}
where the latter is the bundle of fibre-wise endomorphisms of \(\Wbundle\). Differentiating the representation map \(\varrho\) gives us a \(\Spin^G(s,t)\)-equivariant Lie algebra representation \(\varrho_*:=d_\1\varrho:\fso(s,t)\oplus\fg\to\End W\) which induces a vector bundle map
\begin{equation}\label{eq:adPhat-acts-W}
	\ad\Phat\cong \ad F_{SO} \oplus_M \ad \Qbar \longrightarrow \End\Wbundle.
\end{equation}
We thus have actions of the fibres of the adjoint bundles over a point \(x\in M\) on \(\Wbundle_x\) which can be described as follows. Fixing a base-point \(p\in\Phat_x\) picks out base-points \(\varpihat(p)\in (F_{SO})_x\) and \(\varpi_G(p)\in\Qbar_x\). Then for \(A\in\fso(s,t)\), \(a\in\fg\) and \(w\in W\), using subscripts to indicate which bundle each point lies in, we have
\begin{equation}\label{eq:adPhat-W-action}
	[p,(A+a)]_{\ad\Phat}\cdot[p,w]_{\Wbundle} = [p,\varrho_*(A+a)w]_{\Wbundle},
\end{equation}
and of course \([p,(A+a)]_{\ad\Phat}\cdot[p,w]_{\Wbundle} = [\varpihat(p),A]_{\ad F_{SO}}\cdot[p,w]_{\Wbundle} + [\varpi_G(p),a]_{\ad\Qbar}\cdot[p,w]_{\Wbundle}\), where
\begin{equation}\label{eq:adFSO-adQbar-W-action}
\begin{aligned}
	[\varpihat(p),A]_{\ad F_{SO}}\cdot[p,w]_{\Wbundle} &= [p,\varrho_*(A)w]_{\Wbundle}
	&&\text{and}
	& [\varpi_G(p),a]_{\ad\Qbar}\cdot[p,w]_{\Wbundle} &= [p,\varrho_*(a)w]_{\Wbundle}.
\end{aligned}
\end{equation}
We similarly define Lie brackets on each fibre of the adjoint bundles, turning these actions into Lie algebra representations. These definitions are independent of the choice of \(p\); a different choice merely picks out a different set of isomorphisms (of Lie algebras, resp. modules of Lie algebras) of fibres \(\ad_x\Phat, \ad_x F_{SO}, \ad_x \Qbar\) and \(\Wbundle_x\) with typical fibres \(\mathfrak{spin}^G(s,t),\fso(s,t),\fg\) and \(W\).

Applying this construction to \(TM\cong\Phat\times_{\pihat}\RR^{s,t}\) gives an embedding \(\ad F_{SO}\hookrightarrow\End(TM)\) whose image over each \(x\in M\) consists of the \(g\)-skew-symmetric endomorphisms of \(T_xM\), i.e. those \(A\in\End(T_xM)\) satisfying
\begin{equation}
	g_x(X,A(Y))+g_x(A(X),Y)=0
\end{equation}
for all \(X,Y\in T_xM\). As such, we will identify \(\Gamma(\ad F_{SO})\) with \(\fso(M,g)\), the space of \(g\)-skew-symmetric sections of \(\End(TM)\). In fact, this space is an infinite-dimensional Lie algebra over \(\RR\) with the commutator bracket, corresponding to a bracket on \(\Gamma(\ad F_{SO})\) defined fibre-wise as above. Similarly, \(\Gamma(\Phat)\) and \(\fG:=\Gamma(\ad\Qbar)\) are also infinite-dimensional Lie algebras with \(\Gamma(\Phat)\cong \fso(M,g)\oplus\fG\). Finally, we have Lie algebra module actions \(\Gamma(\Phat)\otimes\Gamma(\Wbundle)\to\Gamma(\Wbundle)\), \(\fso(M,g)\otimes\Gamma(\Wbundle)\to\Gamma(\Wbundle)\) and \(\fG\otimes\Gamma(\Wbundle)\to\Gamma(\Wbundle)\), also defined fibre-wise.

\subsection{Connections and curvatures}
\label{sec:connections}

Let us now consider placing a connection on a spin-\(G\) structure \(\varpihat:\Phat\to F_{SO}\) on \((M,g)\). A similar discussion appears in \cite{Nagase1995}.

Recall that \(\Lie\Spin^G(s,t)\cong \fso(s,t)\oplus \fg\) and that its adjoint representation is given by equation \eqref{eq:spin-G-adjoint}. A (principal or Ehresmann) connection on \(\Phat\to M\) is a Lie algebra-valued 1-form \(\eA\in\Omega^1(\Phat;\fso(s,t)\oplus\fg)\) satisfying
\begin{equation}
\begin{aligned}
	R_{[a,r]}^*\eA &= \Ad^{\Spin^G(s,t)}_{[a,r]^{-1}}\circ\eA
	&& \text{and}
	& \eA(\xi_{(X,x)}) &= (X,x)
\end{aligned}
\end{equation}
for all \(a\in\Spin(s,t)\), \(r\in G\), \(X\in\fso(s,t)\) and \(x\in \fg\), where \(R_{[a,r]}:\Phat\to\Phat\) denotes the right action of \([a,g]=\pihat(a,g)\) and \(\xi_{(X,x)}\in\fX(\Phat)\) denotes the fundamental vector field generated by \((X,x)\). Letting \(\varpi_G:\Phat\to \Qbar=\Phat\times_{\pi_G}\Gbar\) denote projection to the canonical \(\Gbar\)-bundle, one can show that
\begin{equation}\label{eq:principal-connections-add}
	\eA = \varpihat^*\omega + \varpi_G^* \alpha,
\end{equation}
where \(\omega\in \Omega^1(F_{SO};\fso(s,t))\), \(\alpha\in \Omega^1(\Qbar;\fg)\) are principal connections, and conversely, any two such connections on \(F_{SO}\to M\) and \(\Qbar\to M\) induce a connection on \(\Phat\to M\). In what follows, we will demand that \(\omega\) is the Levi-Civita connection; equivalently, we can assign a torsion to \(\eA\) using the isomorphism of vector bundles in equation \eqref{eq:TM-assoc-spin-G} and demand that this torsion vanishes. This torsion is equal to that of the associated metric connection \(\omega\), so vanishes if and only if \(\omega\) is the Levi-Civita connection.

The curvature 2-forms \(\Omega_\eA\in\Omega^2(\Phat;\fso(s,t)\oplus\fg)\), \(\Omega_\omega\in\Omega^2(F_{SO};\fso(s,t))\), and \(\Omega_\alpha\in\Omega^2(\Qbar;\fg)\) satisfy
\begin{equation}\label{eq:principal-curvatures-add}
	\Omega_\eA = \varpihat^*\Omega_\omega + \varpi_G^*\Omega_\alpha,
\end{equation}
and if \(\omega\) is the Levi-Civita connection, \(\Omega_\omega\) is of course the Riemann 2-form. The curvature can of course be encoded in a different way; by standard theory of principal connections, these 2-forms are \emph{basic} (horizontal and invariant), whence they descend to sections \(\Rhat\in\Omega^2(M;\ad\Phat)\), \(R\in\Omega^2(M;\ad F_{SO})\) (the Riemann curvature), and \(F\in\Omega^2(M;\ad \Qbar)\) (the field strength of local gauge fields for \(\alpha\)) on \(M\) satisfying
\begin{equation}\label{eq:base-curvatures-add}
	\Rhat(X,Y) = R(X,Y) + F(X,Y)
\end{equation}
for all \(X,Y\in\fX(M)\), where we implicitly use the isomorphism of vector bundles in \eqref{eq:adjoint-bundles-isom}.

The principal connection \(\eA\) on \(\Phat\to M\) induces Koszul connections on associated bundles which we will denote by \(\nablahat\) and call \emph{twisted covariant derivatives}. On even vector bundles, which we have seen can be considered as associated bundles to \(F_{SO}\times_M\Qbar\), we can write \(\nablahat=\nabla + \alpha\) in local trivialisations where \(\nabla\) denotes the covariant derivative of the Levi-Civita connection and we abuse notation to write \(\alpha\) for local gauge fields of the connection \(\alpha\) on \(\Qbar\); if the action of \(R\) on the defining representation is trivial, in particular on \(TM\), we have \(\nablahat=\nabla\). More generally, it is sometimes convenient to locally abuse notation and write \(\nablahat=\nabla + \alpha\) even on non-even associated bundles, where neither \(\nabla\) nor \(\alpha\) are globally defined.

We also have the following. The first two items are properties of covariant derivatives, see \cite[Ch.III Prop.1.2]{Nomizu1963}. The others are a standard computation exercise for connections on associated bundles.

\begin{proposition}
	If \(\varpihat:\Phat\to F_{SO}\) is a spin-\(G\) structure on \((M,g)\) with torsionless principal connection \(\eA\), and \(\Wbundle=\Phat\times_\varrho W\) is an associated bundle, then the covariant derivative \(\nablahat:\fX(M)\otimes\Gamma(\Wbundle)\to\Gamma(\Wbundle)\) has the following properties:
	\begin{enumerate}\label{eq:nablahat-props}
		\item \(\nablahat_{fX+gY}\phi = f\nablahat_X\phi + g\nablahat_Y\phi\) for all \(X,Y\in\fX(M)\), \(f,g\in C^\infty(M)\) and \(\phi\in\Gamma(\Wbundle)\);
		\item \(\nablahat_X (f\phi+g\psi) = (\eL_Xf)\phi + f\nablahat_X\phi + (\eL_Xg)\psi +g\nablahat_X\psi\) for all \(X\in\fX(M)\), \(f,g\in C^\infty(M)\) and \(\phi,\psi\in\Gamma(\Wbundle)\) (where we use scalar multiplication of \(C^\infty(M)\) on \(\Gamma(\Wbundle)\));
		\item \(\nablahat_X \comm{A}{B} = \comm{\nablahat_XA}{B} + \comm{A}{\nablahat_XB}\) for all \(X\in\fX(M)\) and \(A,B\in\Gamma(\Phat)\);
		\item \(\nablahat_X (A\cdot\phi) = (\nablahat_XA)\cdot\phi + A\cdot(\nablahat_X\phi)\) for all \(X\in\fX(M)\) and \(A\in\Gamma(\Phat)\) and \(\phi\in\Gamma(\Wbundle)\).
	\end{enumerate}
	In the last two points, we have used the bracket on \(\Gamma(\Phat)\) and the action of \(\Gamma(\Phat)\) on \(\Gamma(\Wbundle)\) described in \S\ref{sec:spin-G-assoc-bundles}.
\end{proposition}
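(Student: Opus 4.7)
The plan is to use the standard equivariant-function picture of associated bundles. Recall that sections \(\phi\in\Gamma(\Wbundle)\) correspond bijectively to \(\Spin^G(s,t)\)-equivariant smooth maps \(\widetilde\phi:\Phat\to W\), via \(\phi(x)=[p,\widetilde\phi(p)]\) for any \(p\in\Phat_x\); under this identification, the covariant derivative induced by \(\eA\) satisfies \(\widetilde{\nablahat_X\phi}=X^*\widetilde\phi\), where \(X^*\in\fX(\Phat)\) denotes the horizontal lift of \(X\in\fX(M)\) with respect to \(\eA\). The fibrewise bracket on \(\Gamma(\Phat)\cong\Gamma(\ad\Phat)\) and the fibrewise action on \(\Gamma(\Wbundle)\) defined in \S\ref{sec:spin-G-assoc-bundles} translate, in this picture, into the pointwise Lie bracket on \(\mathfrak{spin}^G(s,t)\) and the pointwise action of the derived representation \(\varrho_*:\mathfrak{spin}^G(s,t)\to\End W\), so that \(\widetilde{[A,B]}=[\widetilde A,\widetilde B]\) and \(\widetilde{A\cdot\phi}=\varrho_*(\widetilde A)\widetilde\phi\); both constructions land in equivariant functions because the bracket and \(\varrho_*\) are themselves \(\Spin^G(s,t)\)-equivariant.

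Items (1) and (2) are general properties of the Koszul connection on any associated bundle and follow immediately from the tensoriality of the horizontal lift \(X\mapsto X^*\) in \(X\) and from the fact that any vector field on \(\Phat\) is a derivation of \(C^\infty(\Phat)\); we simply cite \cite[Ch.III Prop.1.2]{Nomizu1963} here.

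For (3) and (4), the key observation is that \(X^*\) acts as a derivation on any pointwise bilinear construction built from smooth vector-valued functions on \(\Phat\). Applied to the pointwise bracket of equivariant \(\mathfrak{spin}^G(s,t)\)-valued functions, this gives
\begin{equation*}
	X^*[\widetilde A,\widetilde B] = [X^*\widetilde A,\widetilde B]+[\widetilde A,X^*\widetilde B],
\end{equation*}
which, upon translating back through the equivariant-function correspondence, yields \(\nablahat_X[A,B]=[\nablahat_X A,B]+[A,\nablahat_X B]\), proving (3). Item (4) is identical in spirit: bilinearity of \((\xi,w)\mapsto\varrho_*(\xi)w\) combined with the derivation property of \(X^*\) gives
\begin{equation*}
	X^*\bigl(\varrho_*(\widetilde A)\widetilde\phi\bigr)=\varrho_*(X^*\widetilde A)\widetilde\phi+\varrho_*(\widetilde A)(X^*\widetilde\phi),
\end{equation*}
which translates to the Leibniz rule for \(A\cdot\phi\).

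The only real obstacle is the bookkeeping required to verify that the abstract fibrewise constructions of \S\ref{sec:spin-G-assoc-bundles} genuinely coincide with these pointwise operations on equivariant functions; once that identification is in hand, both (3) and (4) collapse to the single derivation property of \(X^*\). No curvature or torsion input enters the argument, so the torsion-freeness of \(\eA\) plays no role here beyond fixing \(\omega\) to be the Levi-Civita connection in the earlier constructions.
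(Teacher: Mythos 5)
Your proposal is correct: the paper offers no proof of this proposition, citing \cite[Ch.III Prop.1.2]{Nomizu1963} for items (1)--(2) and dismissing (3)--(4) as ``a standard computation exercise for connections on associated bundles'', and your equivariant-function argument (horizontal lift as a derivation acting on pointwise bilinear operations) is precisely that standard computation. The identification of the fibrewise bracket and module action of \S\ref{sec:spin-G-assoc-bundles} with the pointwise operations on equivariant functions is handled correctly, so nothing is missing.
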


Finally, we have the following formulae for the curvatures above when acting as operators on sections of an associated bundle \(\phi\in\Gamma(\Wbundle)\):
\begin{equation}\label{eq:curvatures-derivative-def}
\begin{aligned}
	\Rhat(X,Y)\cdot\phi 
%		= R^{\nablahat}\phi 
		&= \comm{\nablahat_X}{\nablahat_Y}\phi - \nablahat_{\comm{X}{Y}}\phi,	\\
	R(X,Y)\cdot\phi	
%		= R(X,Y)^{\nablahat}\phi 
		&= \comm{\nabla_X}{\nabla_Y}\phi - \nabla_{\comm{X}{Y}}\phi,			\\
	F(X,Y)\cdot\phi	&= (\nabla_X\alpha)(Y)\cdot\phi - (\nabla_Y\alpha)(X)\cdot\phi + \comm{\alpha(X)}{\alpha(Y)}\cdot\phi. 
\end{aligned}
\end{equation}
Note that although the LHSs are defined globally, as is the RHS of the first equation, the RHS of the second equation is well-defined globally only on sections of even associated bundles or in local trivialisations, and the third is valid only in local trivialisations; we have abused notation to write \(\alpha\) for local gauge fields of the connection on \(\Qbar\).

\section{Covariant Lie derivative and Cartan calculus}
\label{sec:cov-lie-der-cartan}

\subsection{Covariant Lie derivative}
\label{sec:cov-lie-der}

Let us now introduce a notion of Lie derivatives for sections of associated bundles to a spin-\(G\) structure which generalises the Kosmann spinorial Lie derivative \cite{Kosmann1971}. Like the Kosmann derivative, it will only be defined along Killing vectors on the base and not general vector fields as for the ordinary Lie derivative.

Our starting point is the observation, due to Kostant \cite{Kostant1955}, that for any \(X\in\fX(M)\), the Lie derivative of a tensor field \(T\in\Gamma(\Otimes^pTM\otimes\Otimes T^*M)\) can be written as
\begin{equation}\label{eq:lie-der-tensor-formula}
	\eL_X T = \nabla_X T + A_X \cdot T,
\end{equation}
where \(A_X:=-\nabla X\) is the endomorphism of the tangent bundle given by \(Y\mapsto \nabla_Y X\), which acts via the tensor product of representations.

We now recall that for Killing vectors \(X\in\fiso(M,g)\), \(A_X\) lies in \(\fso(M,g)\cong\Gamma(\ad F_{SO})\), whence it naturally acts on sections of associated bundles to any spin-\(G\) structure, not just on tensor fields.

\begin{definition}[Covariant Lie derivative]\label{def:cov-lie-der}
	Let \(\varpihat:\Phat\to F_{SO}\) be a spin-\(G\) structure on an oriented pseudo-Riemannian manifold \((M,g)\), let \(\varrho:\Spin^G(s,t)\to\GL(W)\) be a representation of the spin-\(G\) group and let \(\Wbundle=\Phat\times_\varrho W \to M\) be the associated bundle. Let \(\eA\) be a torsion-free principal connection on \(\Phat\to M\) and \(\nablahat\) be the associated covariant derivative on \(\Wbundle\). Then for a Killing vector \(X\in\fiso(M,g)\), the \emph{covariant Lie derivative along \(X\) (on \(\Wbundle\))} is the operator \(\eLhat_X: \Gamma(\Wbundle)\to\Gamma(\Wbundle)\) given by
	\begin{equation}\label{eq:eLhat-nablahat}
		\eLhat_X\phi = \nablahat_X\phi + A_X\cdot\phi
	\end{equation}
	for \(\phi\in\Gamma(\Wbundle)\), where \(A_X=-\nabla X\).
\end{definition}

A similar derivative on spinor fields was introduced via local expressions in \cite[Sec.7]{deMedeiros2018} in the special case of signature \((1,5)\) with, in the terminology of the present work, a reducible spin-\(h\) (that is, spin-\(\Sp(1)\)) structure with trivial canonical \(\overline{\Sp(1)}=\SO(3)\)-bundle and trivial \(\Sp(1)\) lift thereof. The proposition below is also generalises statements in loc. cit. A more general notion of covariant Lie derivative is reviewed in \cite{Giotopoulos2025}.

The covariant Lie derivative agrees with the ordinary Lie derivative on sections even bundles for which \(G\) acts trivially on the defining representation, in particular on \(\fX(M)\) and \(\fso(M,g)\). It agrees with \(\nablahat\) on those for which \(\Spin(s,t)\) acts trivially, in particular on \(\fG\). It also satisfies the following basic properties, justifying its name.

\begin{proposition}
	The covariant Lie derivative associated to the connection \(\eA\) satisfies the following:
	\begin{enumerate}
		\item \(\eLhat_{fX+gY}\phi = f\eLhat_X\phi + g\eLhat_Y\phi\) for all \(X,Y\in\fiso(M,g)\), \(f,g\in C^\infty(M)\) and \(\phi\in\Gamma(\Wbundle)\);
		\item \(\eLhat_X (f\phi+g\psi) = (\eL_Xf)\phi + f\eLhat_X\phi + (\eL_Xg)\psi +g\eLhat_X\psi\) for all \(X\in\fiso(M,g)\), \(f,g\in C^\infty(M)\) and \(\phi,\psi\in\Gamma(\Wbundle)\) (where we use scalar multiplication of \(C^\infty(M)\) on \(\Gamma(\Wbundle)\));
		\item \(\eLhat_X \comm{A}{B} = \comm{\eLhat_XA}{B} + \comm{A}{\eLhat_XB}\) for all \(X\in\fiso(M,g)\) and \(A,B\in\Gamma(\Phat)\);
		\item \(\eLhat_X (A\cdot\phi) = (\eLhat_XA)\cdot\phi + A\cdot(\eLhat_X\phi)\) for all \(X\in\fiso(M,g)\) and \(A\in\Gamma(\Phat)\) and \(\phi\in\Gamma(\Wbundle)\).
	\end{enumerate}
\end{proposition}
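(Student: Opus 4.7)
The plan is to reduce each of the four properties to the corresponding property of $\nablahat$ established in the proposition of \S\ref{sec:connections}, using the decomposition $\eLhat_X\phi = \nablahat_X\phi + A_X\cdot\phi$ from Definition~\ref{def:cov-lie-der}. The two underlying observations on which everything rests are: (i) $\nablahat$ already satisfies analogues of all four properties; and (ii) the term $A_X\cdot\phi$ is a \emph{pointwise} (fibrewise) algebraic action of $A_X\in\fso(M,g)\cong\Gamma(\ad F_{SO})\hookrightarrow\Gamma(\Phat)$, so in particular it commutes with multiplication by $C^\infty(M)$-functions.

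For property~(1), I would treat the two summands separately: $C^\infty(M)$-linearity of $\nablahat_X$ in $X$ (property~(1) of $\nablahat$) handles the covariant-derivative part, while the map $X\mapsto A_X=-\nabla X$ is $\RR$-linear on the nose. One subtlety here, which is the only delicate point in the proof: strictly speaking, $fX+gY$ is Killing only when $f,g$ are locally constant (unless $X,Y$ satisfy a special algebraic relation at every point), so the cleanest interpretation of property~(1) is as $\RR$-linearity of $\eLhat_{(-)}\phi$ on the Killing algebra $\fiso(M,g)$; the more general $C^\infty(M)$-linear statement applies in the domain of $f,g$ where $fX+gY$ is itself Killing, in which case the identity $A_{fX+gY} = fA_X + gA_Y$ on the Killing representatives drops out.

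For property~(2), I would expand the LHS as $\nablahat_X(f\phi+g\psi) + A_X\cdot(f\phi+g\psi)$, handle the first term via the Leibniz rule~(2) for $\nablahat$, and the second via $C^\infty(M)$-linearity of the pointwise action, so $A_X\cdot(f\phi+g\psi) = fA_X\cdot\phi + gA_X\cdot\psi$; collecting terms reproduces the RHS.

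Properties~(3) and (4) carry the real content. The essential input is that the fibrewise bracket on $\Gamma(\Phat)$ and the fibrewise action $\Gamma(\Phat)\otimes\Gamma(\Wbundle)\to\Gamma(\Wbundle)$ are honest Lie algebra structures (discussed at the end of \S\ref{sec:spin-G-assoc-bundles}). Consequently, for the $\fso(M,g)$-element $A_X$, the identities
\[
	A_X\cdot\comm{A}{B} = \comm{A_X\cdot A}{B} + \comm{A}{A_X\cdot B}, \qquad A_X\cdot(A\cdot\phi) = (A_X\cdot A)\cdot\phi + A\cdot(A_X\cdot\phi)
\]
hold automatically -- the first is the Jacobi identity in $\Gamma(\Phat)$, the second is the defining property of a Lie module. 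Combining each with properties~(3) resp.~(4) for $\nablahat$ and rearranging assembles the required identities. I do not anticipate any serious obstacle; the proof is essentially bookkeeping once one recognises that $A_X\cdot$ is algebraic rather than differential and therefore inherits Leibniz-like identities directly from the Lie-algebra and Lie-module structures in play.
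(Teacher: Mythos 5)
Your proposal is correct and takes essentially the same route as the paper: each identity is reduced to the corresponding property of \(\nablahat\) from the proposition in \S\ref{sec:connections}, with (3) and (4) additionally using that the bracket on \(\Gamma(\Phat)\) and its action on \(\Gamma(\Wbundle)\) are genuine Lie algebra structures, so that \(A_X\cdot\) inherits the required Leibniz identities. One caveat on your aside about property (1): even when \(fX+gY\) happens to be Killing, the identity \(A_{fX+gY}=fA_X+gA_Y\) does not simply ``drop out'' --- on Euclidean \(\RR^2\) take \(X=\partial_x\), \(Y=\partial_y\), \(f=y\), \(g=-x\), so that \(fX+gY\) is a rotation generator with \(A_{fX+gY}\neq 0\) while \(fA_X+gA_Y=0\) --- so the safe reading of (1), which is what your main argument actually establishes, is \(\RR\)-linearity of \(\eLhat_{(-)}\phi\) on \(\fiso(M,g)\); this is a defect of the statement as printed rather than of your proof.
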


\begin{proof}
	Each property follows from the analogous property for \(\nablahat\) in Proposition~\ref{eq:nablahat-props} and, for the last two, the fact that the action of \(\Gamma(\Phat)\) on \(\Gamma(\Wbundle)\) is a Lie algebra action.
\end{proof}

We note that \(\eLhat\) also respects Leibniz rules with respect to the wedge product and Clifford multiplication of forms as well as the Clifford action of differential forms on bundles of Clifford modules. We also have the following.

\begin{proposition}\label{prop:eLhat-props}
	The covariant Lie derivative also satisfies the following properties:
	\begin{align}
		\comm{\eLhat_X}{\nablahat_Y} &= \nablahat_{\comm{X}{Y}} + F(X,Y) \label{eq:eLhat-nablahat-comm}
	\end{align}
	for \(X\in\fiso(M,g)\) and \(Y\in\fX(M)\) (equivalently, \(\comm{\eLhat_X}{\nablahat} = \imath_X F\) for all \(X\in\fiso(M,g)\)) and
	\begin{equation}\label{eq:eLhat-comm}
		\comm{\eLhat_X}{\eLhat_Y} = \eLhat_{\comm{X}{Y}} + F(X,Y)	
	\end{equation}
	for \(X,Y\in\fiso(M,g)\).\footnote{
		Note that the \(F(X,Y)\) terms in these equations are to be understood as operators acting on sections of an associated bundle via the appropriate representation of \(\fg\) (using equation~equation~\eqref{eq:adFSO-adQbar-W-action} fibre-wise).
	}
\end{proposition}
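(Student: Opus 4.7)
The plan is to establish \eqref{eq:eLhat-nablahat-comm} first and then bootstrap it to prove \eqref{eq:eLhat-comm}.

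For \eqref{eq:eLhat-nablahat-comm}, I would substitute $\eLhat_X = \nablahat_X + A_X$ into the commutator and apply Proposition~\ref{eq:nablahat-props}(4) to expand $\nablahat_Y(A_X\cdot\phi) = (\nablahat_Y A_X)\cdot\phi + A_X\cdot\nablahat_Y\phi$. The two occurrences of $A_X\cdot\nablahat_Y\phi$ cancel, yielding $[\eLhat_X,\nablahat_Y]\phi = [\nablahat_X,\nablahat_Y]\phi - (\nablahat_Y A_X)\cdot\phi$. The remaining commutator is evaluated via \eqref{eq:curvatures-derivative-def} and \eqref{eq:base-curvatures-add} as $\nablahat_{[X,Y]}\phi + R(X,Y)\cdot\phi + F(X,Y)\cdot\phi$. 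Since $A_X\in\Gamma(\ad F_{SO})$ lies in an even sub-bundle of $\ad\Phat$ on which $\nablahat$ reduces to $\nabla$, one has $\nablahat_Y A_X = \nabla_Y A_X$, and the classical Kostant identity for a Killing vector gives $\nabla_Y A_X = R(X,Y)$ as elements of $\Gamma(\ad F_{SO})$. This cancels the $R(X,Y)$ contribution and leaves the desired formula.

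For \eqref{eq:eLhat-comm}, I would split $\eLhat_Y = \nablahat_Y + A_Y$ inside the outer commutator, so that $[\eLhat_X,\eLhat_Y]\phi = [\eLhat_X,\nablahat_Y]\phi + [\eLhat_X, A_Y]\phi$. The first summand is \eqref{eq:eLhat-nablahat-comm} just proved. For the second, property (4) of the preceding Proposition applied to $A_Y$ (viewed as a section of $\ad F_{SO}\subset\ad\Phat$) gives $[\eLhat_X, A_Y]\phi = (\eLhat_X A_Y)\cdot\phi$. Because $A_Y$ is a section of an even bundle on which $G$ acts trivially, $\eLhat_X A_Y$ coincides with the ordinary Lie derivative $\eL_X A_Y$; since a Killing vector preserves the Levi-Civita connection ($\eL_X\nabla = 0$), one has $\eL_X A_Y = -\eL_X(\nabla Y) = -\nabla[X,Y] = A_{[X,Y]}$. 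Reassembling yields $\eLhat_{[X,Y]}\phi + F(X,Y)\cdot\phi$, as required.

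The main technical obstacle is the Kostant identity $\nabla_Y A_X = R(X,Y)$: one must verify the sign in the conventions used here for $R$ and for $A_X = -\nabla X$, and then propagate the equality from $\Gamma(\ad F_{SO})$ to an equality of operators on $\Gamma(\Wbundle)$ via the representation $\varrho_*$. Everything else reduces to Leibniz rules and elementary bracket algebra already set up in \S\ref{sec:spin-G-assoc-bundles}--\S\ref{sec:connections}.
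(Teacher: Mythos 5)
Your proposal is correct and follows essentially the same route as the paper: decompose \(\eLhat_X=\nablahat_X+A_X\), use the Leibniz rule and Kostant's identity \(\nabla_Y A_X=R(X,Y)\) together with \(\Rhat=R+F\) for the first identity, then bootstrap it into the second via \(\comm{\eLhat_X}{A_Y}=\eL_XA_Y=A_{\comm{X}{Y}}\). The only (harmless) variation is that you justify \(\eL_XA_Y=A_{\comm{X}{Y}}\) from \(\eL_X\nabla=0\) for Killing \(X\), whereas the paper deduces it from Kostant's identities \(\nabla_XA_Y=-R(X,Y)\) and \(A_{\comm{X}{Y}}=\comm{A_X}{A_Y}-R(X,Y)\); both are standard and equivalent.
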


\begin{proof}
	We will make use of some identities due to Kostant \cite{Kostant1955}. First, if \(X\) is a Killing vector then \(\nabla_YA_X = R(X,Y)\), so
	\begin{equation}
	\begin{split}
		\comm{\eLhat_X}{\nablahat_Y} 
			&= \comm{\nablahat_X}{\nablahat_Y} + \comm{A_X}{\nablahat_Y}	\\
			&= \comm{\nablahat_X}{\nablahat_Y} - \nabla_Y A_X	\\
			&= \nablahat_{\comm{X}{Y}} + \Rhat(X,Y) - R(X,Y)	\\
			&= \nablahat_{\comm{X}{Y}} + F(X,Y),
	\end{split}
	\end{equation}
	where we have used the curvature identities \eqref{eq:base-curvatures-add} and \eqref{eq:curvatures-derivative-def}; this demonstrates the first property. A Leibniz rule, the formula \eqref{eq:lie-der-tensor-formula} for the Lie derivative and the identities \(\nabla_X A_Y = - R(X,Y)\) and \(A_{\comm{X}{Y}} = \comm{A_X}{A_Y} - R(X,Y)\) for Killing vectors \(X,Y\) yield
	\begin{equation}
		\comm{\eLhat_X}{A_Y}=\eL_X A_Y = \nabla_X A_Y + \comm{A_X}{A_Y} = - R(X,Y) + \comm{A_X}{A_Y} = A_{\comm{X}{Y}},
	\end{equation}
	and using this as well as the first property gives us
	\begin{equation}
		\comm{\eLhat_X}{\eLhat_Y}
			= \comm{\eLhat_X}{\nablahat_Y}  + \comm{\eLhat_X}{A_Y} 
			= \nablahat_{\comm{X}{Y}} + F(X,Y) + A_{\comm{X}{Y}} 
			= \eLhat_{\comm{X}{Y}} + F(X,Y),
	\end{equation}
	which demonstrates the second property.
\end{proof}

In particular, if \(\alpha\) is not a flat connection, the covariant Lie derivative does not define a representation of the Lie algebra of Killing vectors \(\fiso(M,g)\) on sections of associated bundles. In Section~\ref{sec:symm-alg-spin-g}, we will construct an extension of \(\fiso(M,g)\) which does possess such a representation.

\subsection{Covariant Cartan calculus}
\label{sec:cov-cartan-calculus}

Recall that the choice of connection \(\eA\) on \(\Phat\) induces not just a covariant derivative on all associated bundles, but a \emph{covariant exterior derivative}, which we will denote by \(\dhat\), on differential forms with values in an associated bundle. With this notation, we have Bianchi identities \(\dhat\Rhat =0\) and \(\dhat F = 0\).

The ordinary Lie derivative, exterior derivative and interior derivative (contraction) on differential forms satisfy a number of identities collectively known as \emph{Cartan calculus} which are extremely useful for computations. We will now show that our covariant derivatives satisfy a similar \emph{covariant Cartan calculus} when acting on differential forms with values in bundles associated to the spin-\(G\) structure.

\begin{proposition}[Covariant Cartan calculus]\label{prop:cov-cartan-calculus}
	Let \(\varrho:\Spin^G(s,t)\to \GL(W)\) be a representation with associated bundle \(\Wbundle = \Phat\times_\varrho W \to M\). Then we have the following Cartan formula for the covariant Lie derivative along any Killing vector \(X\) when acting on \(\Omega^\bullet(M;\Wbundle):=\Gamma\qty(\Wedge^\bullet T^*M\otimes \Wbundle)\):
	\begin{equation}
		\eLhat_X = \imath_X\dhat + \dhat\imath_X.
	\end{equation}
	We also have the following identities of operators on \(\Omega^\bullet(M;\Wbundle)\):\footnote{
			Here and elsewhere, the wedge of a form with values in \(\ad\Qbar\) (or locally in \(\fg\)) with a form with values in \(\Wbundle\) is understood to include the action of the former on the latter via the representation of \(\fg\) on \(W\) (as in equation~\eqref{eq:adFSO-adQbar-W-action}).
			}
	\begin{equation} \label{eq:eLhat-dhat-comm}
		\comm{\eLhat_X}{\dhat} = \imath_X F \wedge,
	\end{equation}
	for all \(X\in\fiso(M,g)\);
	\begin{equation}\label{eq:eLhat-i-comm}
		\comm{\eLhat_X}{\imath_Y} = \imath_{\comm{X}{Y}},
	\end{equation}
	for all \(X\in\fiso(M,g)\) and \(Y\in\fX(M)\);
	\begin{equation}\label{eq:eLhat-comm-forms}
		\comm{\eLhat_X}{\eLhat_Y} = \eLhat_{\comm{X}{Y}} + F(X,Y),
	\end{equation}
	for all \(X,Y\in\fiso(M,g)\).
\end{proposition}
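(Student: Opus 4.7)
The plan is to establish the four identities in the order (iii), (i), (ii), (iv), exploiting the $\Omega^\bullet(M)$-module structure of $\Omega^\bullet(M;\Wbundle)$ and reducing each statement to verification on a generating set. The Cartan formula (i) is the key step; once it is established, (ii) and (iv) follow by formal (super)commutator manipulations, while (iii) serves as a warm-up.

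For (iii), both sides are $\RR$-linear degree $-1$ operators, and since $\imath_Y$ acts only on the scalar-form factor of a decomposable element $\alpha\otimes\phi$ with $\alpha\in\Omega^\bullet(M)$ and $\phi\in\Gamma(\Wbundle)$, the identity reduces via the tensor-product Leibniz rule for $\eLhat_X$ to the classical commutator identity $\comm{\eL_X}{\imath_Y}=\imath_{\comm{X}{Y}}$ on scalar forms. For the Cartan formula (i), I would show both sides are degree-$0$ even derivations of $\Omega^\bullet(M;\Wbundle)$ with respect to wedge product by scalar forms --- the LHS by the Leibniz-type properties of $\eLhat_X$ from the preceding proposition, and the RHS as the graded anticommutator of the odd derivations $\imath_X$ and $\dhat$. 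It then suffices to verify agreement on $\Gamma(\Wbundle)$ (directly comparing the definition $\eLhat_X\phi=\nablahat_X\phi+A_X\cdot\phi$ with $\imath_X\dhat\phi+\dhat\imath_X\phi=\imath_X\nablahat\phi$) and on exact scalar $1$-forms $df$, where the identity reduces to the classical Cartan formula.

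With (i) and (iii) in hand, the remaining identities follow formally. For (ii), applying $\dhat$ on both sides of the Cartan formula and using $\dhat^2=\Rhat\wedge=(R+F)\wedge$ on $\Wbundle$-valued forms together with the Leibniz identity $\comm{\imath_X}{\Omega\wedge}=(\imath_X\Omega)\wedge$ for a $2$-form $\Omega$, one obtains $\comm{\eLhat_X}{\dhat}=(\imath_X\Rhat)\wedge$; Kostant's identity $\nabla_Y A_X=R(X,Y)$ for Killing $X$ then accounts for the Riemann contribution, leaving $(\imath_X F)\wedge$. Alternatively one verifies (ii) directly on sections using $\comm{\eLhat_X}{\nablahat_Y}=\nablahat_{\comm{X}{Y}}+F(X,Y)$ from Proposition \ref{prop:eLhat-props} and extends by the derivation property. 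For (iv), expanding $\eLhat_Y$ via (i) and applying (ii) and (iii) gives $\comm{\eLhat_X}{\eLhat_Y}=\imath_{\comm{X}{Y}}\dhat+\dhat\imath_{\comm{X}{Y}}+\imath_Y(\imath_X F\wedge)+(\imath_X F\wedge)\imath_Y$; the first two terms recombine to $\eLhat_{\comm{X}{Y}}$ by (i), and the remaining anticommutator simplifies via the odd-derivation Leibniz rule for $\imath_Y$ to $(\imath_Y\imath_X F)\wedge=F(X,Y)\wedge$, which is the claimed $F(X,Y)$ term.

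The main obstacle is the Cartan formula (i) itself: establishing it on all of $\Omega^\bullet(M;\Wbundle)$ requires careful bookkeeping of how the Kosmann-type $A_X\cdot\phi$ term from the definition of $\eLhat_X$ on $\Gamma(\Wbundle)$ interacts with the natural extension of $\eLhat_X$ to higher-degree forms (through the tensor-product representation on $\Wedge^\bullet T^*M\otimes\Wbundle$) and with the covariant exterior derivative $\dhat$. Once this foundational identity is in place, the remaining statements follow by formal (super)commutator algebra, with essentially no further input beyond Kostant's identities and the decomposition $\Rhat=R+F$.
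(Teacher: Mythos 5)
Your overall architecture (reduce everything to the Cartan formula, then obtain \eqref{eq:eLhat-dhat-comm} and \eqref{eq:eLhat-comm-forms} by formal supercommutator algebra) is reasonable, and your treatment of \eqref{eq:eLhat-i-comm} and of the deduction of \eqref{eq:eLhat-comm-forms} from the other identities is fine. The genuine problem is the base case of your derivation-style proof of the Cartan formula. On \(\Omega^0(M;\Wbundle)=\Gamma(\Wbundle)\) one has \(\imath_X\phi=0\) and \(\imath_X\dhat\phi=\nablahat_X\phi\), so \((\imath_X\dhat+\dhat\imath_X)\phi=\nablahat_X\phi\), whereas by Definition~\ref{def:cov-lie-der} \(\eLhat_X\phi=\nablahat_X\phi+A_X\cdot\phi\). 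You write these two expressions side by side and declare them equal, but they differ by \(A_X\cdot\phi\), which is nonzero whenever \(\fso(s,t)\) acts nontrivially on \(W\) --- precisely the spinorial case of interest. So the generator check does not close, and since your routes to \eqref{eq:eLhat-dhat-comm} and \eqref{eq:eLhat-comm-forms} pass through the Cartan formula, the gap propagates: with \(\dhat^2=\Rhat\wedge=(R+F)\wedge\) (which you use correctly) and an uncorrected Cartan formula, the formal computation gives \(\comm{\eLhat_X}{\dhat}=(\imath_X\Rhat)\wedge\), and your remark that ``Kostant's identity accounts for the Riemann contribution'' is exactly the point where the missing \(A_X\)-term would have to do work you have not set up. Your alternative derivation of \eqref{eq:eLhat-dhat-comm} directly from Proposition~\ref{prop:eLhat-props} is sound, because there the \(R(X,Y)\) arising from \(\comm{\nablahat_X}{\nablahat_Y}\) is cancelled by \(\comm{A_X\cdot\,}{\nablahat_Y}=-\nabla_YA_X\cdot=-R(X,Y)\cdot\); that cancellation is what your Cartan-formula route fails to reproduce.

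For comparison, the paper proves the Cartan formula by a purely local computation: it writes \(\eLhat_X=\eL_X+(\imath_X\alpha)\cdot\) and \(\dhat=\dd+\alpha\wedge\) in a trivialisation, invokes the classical Cartan formula for \(\eL_X\), and cancels the \(\alpha\)-terms by the Leibniz rule for \(\imath_X\); the Levi-Civita and \(A_X\) bookkeeping that your global argument forces into the open is there absorbed into the symbols \(\eL_X\) and \(\dd\). Your approach is in principle cleaner, but it exposes a discrepancy that the local argument glosses over. To repair it you must either state and justify precisely which extension of \(\eLhat_X\) to \(\Omega^\bullet(M;\Wbundle)\) is being used, or else prove the corrected global identity \(\eLhat_X=\imath_X\dhat+\dhat\imath_X+\varrho_*(A_X)\), where \(\varrho_*(A_X)\) acts only on the \(\Wbundle\)-factor and not on \(\Wedge^\bullet T^*M\) (Kostant's formula shows that \(\imath_X\dhat+\dhat\imath_X\) already accounts for the action of \(A_X\) on the form factor), and then track the extra term through the derivations of \eqref{eq:eLhat-dhat-comm} and \eqref{eq:eLhat-comm-forms}.
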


\begin{proof}
	The covariant Cartan formula can be proven in a number of ways. We will show it locally, using the Cartan formula for the ordinary Lie derivative. For \(\omega\in\Omega^k(M;\Wbundle)\), in a local trivialisation, we treat \(\omega\) as taking values in \(W\) and the local gauge 1-form \(\alpha\) as taking values in \(\fg\) and compute
	\begin{equation}
	\begin{split}
		\eLhat_X\omega &= \eL_X\omega + (\imath_X\alpha)\cdot \omega	\\
		&= \imath_Xd\omega + d\imath_X\omega + (\imath_X\alpha)\cdot \omega	\\
		&= \imath_X\dhat\omega + \dhat\imath_X\omega - \imath_X(\alpha\wedge\omega) + (\imath_X\alpha)\cdot\omega - \alpha\wedge\imath_X\omega.
	\end{split}
	\end{equation}
	The last three terms in the final line cancel by a Leibniz rule for the contraction \(\imath\). The next identity follows from the Cartan formula and the standard identity \(\dhat^2=F\wedge\) (which one can verify by locally writing \(\dhat=d+\alpha\wedge\)):
	\begin{equation}
	\begin{split}
		\comm{\eLhat_X}{\dhat}\omega
			&= \imath_X\dhat^2\omega + \dhat\imath_X\dhat\omega - \dhat\imath_X\dhat\omega - \dhat^2\imath_X\omega \\
			&= \imath_X(F\wedge \omega) - F\wedge\imath_X\omega	\\
			&= (\imath_X F) \wedge \omega,
	\end{split}
	\end{equation}
	once again using the Leibniz identity for the contraction. The next identity is nothing but a Leibniz identity for the covariant Lie derivative. We already saw the final identity as part of Proposition~\ref{prop:eLhat-props}, but we provide an alternative proof when acting on associated bundle-valued forms. Indeed, using all of the previous identities, we have
	\begin{equation}
	\begin{split}
		\comm{\eLhat_X}{\eLhat_Y}\omega
			&= \comm{\eLhat_X}{\imath_Y\dhat}\omega + \comm{\eLhat_X}{\dhat\imath_Y}\omega	\\
			&= \imath_Y\comm{\eLhat_X}{\dhat}\omega + \comm{\eLhat_X}{\imath_Y}\dhat\omega + \dhat\comm{\eLhat_X}{\imath_Y}\omega + \comm{\eLhat_X}{\dhat}\imath_Y\omega	\\
			&= \imath_Y((\imath_X F)\wedge\omega) + \imath_{\comm{X}{Y}}\dhat\omega + \dhat\imath_{\comm{X}{Y}}\omega + (\imath_X F)\wedge\imath_Y\omega	\\
			&= \eLhat_{\comm{X}{Y}}\omega + F(X,Y)\cdot \omega
	\end{split}
	\end{equation}
	for all Killing vectors \(X,Y\).
\end{proof}

\subsection{The symmetry algebra of a spin-\(G\) structure}
\label{sec:symm-alg-spin-g}

Let \((M,g)\) be a connected and oriented pseudo-Riemannian manifold of signature \((s,t)\) with spin-\(G\) structure \(\varpihat:\Phat\to F_{SO}\) with torsion-free principal connection \(\eA\).
As in \S\ref{sec:spin-G-assoc-bundles}, we identify the space of sections of \(\ad F_{SO}\) with \(\fso(M,g)\), the Lie algebra of \(g\)-skew-symmetric sections of \(\End(TM)\), and let \(\fG\) denote the Lie algebra of section of \(\ad\Qbar\) where \(\Qbar=\Phat\times_{\pi_G}\Gbar\to M\) is the canonical \(\Gbar\)-bundle.
We previously noted that, due to \eqref{eq:eLhat-comm}, the covariant Lie derivative fails to define a representation of the Lie algebra of Killing vectors \(\fiso(M,g)\) on sections of associated bundles. We will now define a Lie algebra \(\fsymm(\varpihat,\eA)\), the symmetry algebra, which does admit a representation on such sections.

We first note that since \(\ad Q\) is an associated bundle to the spin-\(G\) structure (via the adjoint action of \(\Spin^G(s,t)\)), the covariant Lie derivative defines an \(\RR\)-linear map \(\eLhat:\fiso(M,g)\otimes\fG\to\fG\); since \(\comm{\fso(s,t)}{\fg}=0\), for all \(X\in\fiso(M,g)\) and \(\gamma\in\fG\) we have \(A_X\cdot \gamma = \ad_{A_X}(\gamma) = \comm{A_X}{\gamma} = 0\) (where \(A_X:=-\nabla X\)) in \(\fiso(M,g)\oplus\fG\), so
\begin{equation}
	\eLhat_X\gamma = \nablahat_X\gamma.
\end{equation}
We also recall that the curvature \(F\) of the connection \(\alpha\) on \(\Qbar\) is an element of \(\Omega^2(M;\ad\Qbar)\), so for \(X,Y\in\fX(M)\) we have \(F(X,Y)\in\fG\). We then have the following.

\begin{proposition}\label{prop:symm-alg}
	Let \(\varpihat:\Phat\to F_{SO}\) be spin-\(G\) structure on \((M,g)\) and \(\eA\in\Omega^2(\Phat,\fso(s,t)\oplus\fg)\) a torsionless connection.
	Then the pair \(\qty(\fsymm(\varpihat,\eA) = \fiso(M,g)\oplus\fG,\ccomm{-}{-})\) is a Lie algebra, where the bracket \(\ccomm{-}{-}\) is defined by
	\begin{equation}
	\begin{aligned}
		\ccomm{X}{Y} &= \comm{X}{Y}_{\fiso(M,g)} + F(X,Y) = \eL_X Y + F(X,Y),\\
		\ccomm{X}{\gamma} &= \eLhat_X\gamma = \nablahat_X\gamma,\\
		\ccomm{\gamma}{\gamma'} &= \comm{\gamma}{\gamma'}_{\fG},
	\end{aligned}
	\end{equation}
	where \(X,Y\in \fiso(M,g)\), \(\gamma,\gamma'\in\fG\), and the subscripts on the brackets on the RHS denote which Lie algebra they are to be taken in.
	
	If \(\varrho:\Spin^G(s,t)\to\GL(W)\) is a representation and \(\Wbundle=\Phat\times_\varrho W\) the associated bundle, then \(\Gamma(\Wbundle)\) is a \(\fsymm(\varpihat,\eA)\)-module, where elements of \(\fiso(M,g)\) act via \(\eLhat\) and those of \(\fG\) act fibre-wise (via equation~\eqref{eq:adFSO-adQbar-W-action}).
\end{proposition}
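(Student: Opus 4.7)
The plan is to verify the Lie algebra axioms on $\fsymm(\varpihat,\eA) = \fiso(M,g)\oplus\fG$ and then the module axiom on $\Gamma(\Wbundle)$. Well-definedness of the bracket is immediate: Killing vectors close under Lie brackets, $F(X,Y)\in\fG$ since $F\in\Omega^2(M;\ad\Qbar)$, and $\nablahat_X\gamma\in\fG$ because covariant differentiation preserves sections. Bilinearity and antisymmetry follow at once from the corresponding properties of the brackets on $\fiso(M,g)$ and $\fG$ together with the antisymmetry of the 2-form $F$, so the substantive content lies in the Jacobi identity and the module axiom.

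For the Jacobi identity I would split by the number of $\fiso$-entries. Three $\fG$-entries is Jacobi in $\fG$. For one Killing vector $X$ and two $\gamma,\gamma'\in\fG$, the required identity
\begin{equation*}
	\nablahat_X[\gamma,\gamma']_\fG = [\nablahat_X\gamma,\gamma']_\fG + [\gamma,\nablahat_X\gamma']_\fG
\end{equation*}
is property~(3) of Proposition~\ref{eq:nablahat-props} restricted to $\fG\subset\Gamma(\Phat)$. For two Killing vectors $X,Y$ and one $\gamma$, I would match
\begin{equation*}
	\ccomm{\ccomm{X}{Y}}{\gamma} = \nablahat_{[X,Y]_\fiso}\gamma + [F(X,Y),\gamma]_\fG
\end{equation*}
with $[\eLhat_X,\eLhat_Y]\gamma$ from \eqref{eq:eLhat-comm}, using that $F(X,Y)\in\fG$ acts on $\gamma\in\fG$ via the adjoint representation of $\fg$, which is fibre-wise the bracket on $\fG$; equivalently, that the $\fso(s,t)$-contribution to $\Rhat(X,Y)\cdot\gamma$ is zero because $\fso(s,t)$ and $\fg$ commute inside $\mathfrak{spin}^G(s,t)$.

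The main obstacle is the three-Killing-vector case. Expanding $\ccomm{X}{\ccomm{Y}{Z}}$ and summing cyclically, the purely $\fiso$ part vanishes by Jacobi for vector fields, and what remains is
\begin{equation*}
	\sum_{\text{cyc}}\bigl(\nablahat_X F(Y,Z) + F(X,[Y,Z]_\fiso)\bigr).
\end{equation*}
Using antisymmetry of $F$ to rewrite each $F(X,[Y,Z])$ as $-F([Y,Z],X)$ and reindexing, this sum is precisely $(\dhat F)(X,Y,Z)$, which vanishes by the second Bianchi identity $\dhat F=0$. Recognising the cyclic sum as $\dhat F$ is the only real manoeuvre here.

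For the module axiom on $\Gamma(\Wbundle)$: the $\fG$-$\fG$ case is the Lie algebra action property of the fibre-wise action of $\fG$ noted at the end of \S\ref{sec:spin-G-assoc-bundles}; the mixed case $X\in\fiso,\gamma\in\fG$ follows from the Leibniz rule $\eLhat_X(\gamma\cdot\phi) = (\eLhat_X\gamma)\cdot\phi + \gamma\cdot\eLhat_X\phi$ together with $\eLhat_X\gamma=\nablahat_X\gamma$; and the $\fiso$-$\fiso$ case is \eqref{eq:eLhat-comm} applied to $\phi$, reading off that the extra $F(X,Y)$-term on each side is precisely the contribution of the $\fG$-part of $\ccomm{X}{Y}$ acting on $\phi$.
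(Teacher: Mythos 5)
Your proposal is correct and follows essentially the same route as the paper: the same case split for the Jacobi identity (Leibniz for $\nablahat$, the commutator identity \eqref{eq:eLhat-comm}, and the Bianchi identity for $F$ in the three-Killing-vector case, which the paper phrases as the cyclic sum of $(\nablahat_X F)(Y,Z)$ rather than directly as $(\dhat F)(X,Y,Z)$ — these coincide by torsion-freeness), and the same three checks for the module structure.
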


\begin{proof}
	Clearly the expressions given extend to a map \(\Wedge^2\fsymm(\varpihat,\eA)\to\fsymm(\varpihat,\eA)\); we need only check the Jacobi identities. Since \(\ccomm{-}{-}\) restricts to the natural Lie bracket on \(\fG\), the Jacobi identity for three elements of \(\fG\) is automatically satisfied. By a Leibniz rule for the covariant derivative we have
	\begin{equation}
		\ccomm{X}{\ccomm{\gamma}{\gamma'}} = \nablahat_X\comm{\gamma}{\gamma'} = \comm{\nablahat_X{\gamma}}{\gamma'} + \comm{\gamma}{\nablahat_X\gamma'} = \ccomm{\ccomm{X}{\gamma}}{\gamma'} + \ccomm{\gamma}{\ccomm{X}{\gamma'}},
	\end{equation}
	for \(X\in\fiso(M,g)\) and \(\gamma,\gamma'\in\fG\), and using the identity \eqref{eq:eLhat-comm} we find
	\begin{equation}
		\ccomm{X}{\ccomm{Y}{\gamma}} - \ccomm{Y}{\ccomm{X}{\gamma}}
			= \comm{\eLhat_X}{\eLhat_Y}\gamma
			= \eLhat_{\comm{X}{Y}}\gamma + \comm{F(X,Y)}{\gamma}
			= \ccomm{\ccomm{X}{Y}}{\gamma},
	\end{equation}
	for all \(X,Y\in\fiso(M,g)\) and \(\gamma\in\fG\). It remains check the Jacobi identity for three Killing vectors; this is a little more involved than the other cases. For Finally, for \(X,Y,Z\in\fiso(M,g)\), we find that
	\begin{equation}
		\ccomm{X}{\ccomm{Y}{Z}} = \eL_X\eL_Y Z + F(X,\eL_YZ) + \eLhat_X(F(Y,Z)),
	\end{equation}
	and, noting that \(\eLhat_X(F(Y,Z))=\nablahat_X(F(Y,Z))\), upon taking cyclic permutations of the above we find that the remaining component of the Jacobi identity is equivalent to
	\begin{equation}
		\qty(\nablahat_XF)(Y,Z) + \qty(\nablahat_YF)(Z,X) + \qty(\nablahat_ZF)(X,Y) = 0
	\end{equation}
	for all Killing vector fields \(X,Y,Z\). But this equation holds identically -- indeed, it is nothing but the Bianchi identity for \(F\).

	For the second part of the proposition, equation \eqref{eq:eLhat-comm} gives us
	\begin{equation}
		\ccomm{X}{Y}\cdot\phi 
			= \eLhat_{\comm{X}{Y}}\phi + F(X,Y)\cdot\phi
			= \eLhat_X\qty(\eLhat_Y\phi) - \eLhat_Y\qty(\eLhat_X\phi)
	\end{equation}
	for all \(X,Y\in\fiso(M,g)\) and \(\phi\in\Gamma(\Wbundle)\), where \(\cdot\) denotes the action of \(\fG\) on \(\Gamma(\Wbundle)\) induced by the action of \(\fg\) on \(W\). The Leibniz rule for \(\eLhat_X\) gives 
	\begin{equation}
		\ccomm{X}{\gamma}\cdot\phi
		= \qty(\eLhat_X \gamma)\cdot\phi 
		= \eLhat_X (\gamma\cdot\phi) - \gamma\cdot\qty(\eLhat_X \phi)
	\end{equation}
	for \(X\in\fiso(M,g)\), \(\gamma\in\fG\) and \(\phi\in\Gamma(\Wbundle)\), and we also have
	\begin{equation}
		\ccomm{\gamma}{\gamma'}\cdot\phi
		= \comm{\gamma}{\gamma'}\cdot\phi
		= \gamma\cdot(\gamma'\cdot\phi) - \gamma'\cdot(\gamma\cdot\phi)
	\end{equation}
	for all \(\gamma,\gamma'\in\fG\) and \(\phi\in\Gamma(\Wbundle)\), since the second equality holds pointwise for the action of \(\fg\) on \(\Gamma(\Wbundle)\). This shows that \(\Gamma(\Wbundle)\) is a module for \(\fsymm(\varpihat,\eA)\).
\end{proof}

It is manifest in the definition of the bracket \(\ccomm{-}{-}\) that \(\fsymm(\varpihat,\eA)\) is \emph{not} the direct sum of Lie algebras \(\fiso(M,g)\oplus\fG\) in general, but rather an extension of \(\fiso(M,g)\) by \(\fG\); we have a short exact sequence of Lie algebras
\begin{equation}
\begin{tikzcd}
	0 \ar[r] & \fG \ar[r] & \fsymm(\varpihat,\eA) \ar[r] & \fiso(M,g) \ar[r] & 0
\end{tikzcd}
\end{equation}
where the map \(\fG\to\fsymm(\varpihat,\eA)\) is the inclusion and \(\fsymm(\varpihat,\eA)\to\fiso(M,g)\) is projection. Note that if \(\imath_XF=0\) for all \(X\in\fiso(M,g)\), in particular if the connection \(\alpha\) is flat, then \(\ccomm{-}{-}\) restricted to \(\fiso(M,g)\) is simply the Lie bracket of vector fields, and we have \(\fsymm(\varpihat,\eA)=\fiso(M,g)\ltimes\fG\). From another point of view, \(\fsymm(\varpihat,\eA)\) is a Lie algebra deformation of \(\fiso(M,g)\oplus\fG\).

\section{Homogeneous generalised spin structures}
\label{sec:homogeneous-gen-spin}

Generalising the classic notion of homogeneous (a.k.a. equivariant) \emph{spin} structures on homogeneous pseudo-Lorentzian manifolds \cite{Bar1992,Cahen1993,Alekseevsky2019}, in this section we will discuss homogeneous \emph{generalised} spin structures which have previously appeared in e.g. \cite{Artacho2023}. See Kobayashi--Nomizu for general background on homogeneous spaces \cite[Ch.X]{Nomizu1969} and invariant connections \cite[Ch.II]{Nomizu1963}.

\subsection{Definitions}
\label{sec:homog-gen-spin-defs}

Let \((G,K,\eta)\) be a metric Klein pair (a connected Lie group \(G\) and closed subgroup \(K\) with \(K\)-invariant (pseudo-)inner product \(\eta\) on \(\fg/\fk\)) with signature \((s,t)\), let \((M=G/K,g)\) be the associated homogeneous pseudo-Riemannian \(G\)-space (which we assume to be oriented) and let \(V=T_oM\cong \fg/\fk\) where \(o=K\in M\). By taking the push-forward \(g_*\) of the diffeomorphism assigned to each element \(g\in G\), the action of \(G\) on \(M\) lifts to an action on \(TM\); the action of the isotropy group \(K\) preserves \((V,\eta)\), giving us the \emph{linear isotropy representation} \(\varphi:K\to\SO(V)\). The action of \(G\) on \(TM\) induces in turn an action on the special orthonormal frame bundle \(F_{SO}\), where
\begin{equation}\label{eq:g-f}
	g\cdot (f_i)_{i=1}^{\dim V} = (g\cdot f_i)_{i=1}^{\dim V} = (g_*f_i)_{i=1}^{\dim V}
\end{equation}
for all \(g\in G\) and all frames \(f=(f_i)_{i=1}^{\dim V}\). Fixing a frame \(f\) over \(o=K\), we may represent any \(A\in\SO(V)\) as a matrix \(\underline{A}\) with components \(A\indices{^i_j}\) in this frame where \(Af_i=A\indices{^i_j}f_i\). This gives us a Lie group isomorphism \(\SO(V)\xrightarrow{\cong}\SO(s,t)\), where \(A\) is mapped to the transformation on \(\RR^{s,t}\) represented in the standard basis by \(\underline{A}\), which allows us to view \(F_{SO}\) as an \(\SO(V)\)-principal bundle; we then have a \(G\)-equivariant isomorphism of \(\SO(V)\)-principal bundles
\begin{equation}\label{eq:homog-frame-bundle-assoc}
	G\times_\varphi\SO(V) \xlongrightarrow{\cong} F_{SO}
\end{equation}
given by \([g,A]\mapsto g\cdot f\cdot A\) where the left action of \(G\) on the first bundle is given by \(g\cdot[g',A]=[gg',A]\). and is compatible with the right action of \(\SO(V)\).

Now let \(H\) be some other Lie group and suppose that there exists a lift of the linear isotropy representation \(\varphi:K\to\SO(V)\) along the canonical map \(\pihat:\Spin^H(V)\to\SO(V)\); that is, a Lie group morphism \(\varphihat:K\to\Spin^H(V)\) making the following diagram
\begin{equation}\label{eq:lift-to-spin-H}
\begin{tikzcd}
	&	& \Spin^H(V) \ar[d,two heads,"\pihat"]	\\
	& K	\ar[r,"\varphi"] \ar[ur,dashed,"\varphihat"]& \SO(V)
\end{tikzcd}
\end{equation}
commute. Then \(\pihat:\Spin^H(V)\to\SO(V)\) induces a \(G\)-equivariant spin-\(H\) structure on \(M\),\footnote{
	A lift \(\hat{f}\in \Phat_o\) of \(f\in (F_{SO})_o\) induces an isomorphism \(\Spin^H(V)\cong\Spin^H(s,t)\) lifting the isomorphism \(\SO(V)\cong\SO(s,t)\) induced by \(f\), making \(\Phat\to M\) into a \(\Spin^H(s,t)\)-principal bundle.	
	}
\begin{equation}
	\varpihat: \Phat := G\times_{\varphihat}\Spin^H(V)\longrightarrow F_{SO}\cong G\times_\varphi\SO(V),
\end{equation}
where the left action of \(G\) on \(\Phat\) is given by \(g\cdot[g',A]=[gg',A]\) for \(g,g'\in G\), \(A\in\Spin^H(V)\) and is compatible with the right action of \(\Spin^H(V)\); clearly this lifts the action on \(F_{SO}\). We call this the \emph{homogeneous spin-\(H\) structure associated to the lift \(\varphihat:K\to\Spin^H(V)\)}. We call the manifold \(M\) together with this spin structure the \emph{homogeneous spin-\(H\) manifold} associated to the metric Klein pair \((G,K,\eta)\) and the lift \(\varphihat\).

A classic result says that for \(G\) simply connected and \(\eta\) positive-definite, lifts of the isotropy representation along \(\pi:\Spin(V)\to\SO(V)\) classify \emph{spin} structures on a homogeneous \(G\)-space \cite{Bar1992,Cahen1993}. This result fails without both hypotheses because there are obstructions to lifting the action of \(G\) from \(F_{SO}\) to a spin cover in general; obstructions occur indefinite signature because the spin group is disconnected, and in the time-orientable where the structure groups can be reduced to their connected components (i.e. reducing to \emph{strong} spin structures in the sense of \cite{Cortes2021,Shahbazi2024_1,Shahbazi2024_2}), the connected component is not simply connected for \((p\geq 2,q\geq 2)\). However, if \(G\) is simply connected and  \(M\) is a \emph{reductive} \(G\)-space, the above still holds in arbitrary signature \cite{Alekseevsky2019,Cahen1993}. Since a spin-\(H\) structure is not even a cover of the frame bundle, one cannot expect an analogous result even with the hypotheses. We therefore work in a more restrictive context, essentially assuming the existence of a lift of the action.

\begin{definition}
	A \emph{homogeneous spin-\(H\) structure} on a homogeneous pseudo-Riemannian \(G\)-space \((M,g)\) is a spin-\(H\) structure \(\varpihat:\Phat\to F_{SO}\) on \(M\) equipped with a left action of \(G\) on \(\Phat\), compatible with the right action of \(\Spin^H(s,t)\), such that \(\varpihat\) is \(G\)-equivariant.
	
	Two such structures \(\varpihat:\Phat\to F_{SO}\) and \(\varpihat':\Phat'\to F_{SO}\) are \emph{equivalent} if there exists a \(G\)-equivariant isomorphism of \(\Spin^H(V)\)-principal bundles \(\Phi:\Phat\to\Phat'\) such that the following diagram commutes:
	\begin{equation}
		\begin{tikzcd}
			& \Phat\ar[rr,"\Phi"]\ar[dr,"\varpihat"']	&	& \Phat' \ar[dl,"\varpihat'"]\\
			&	& F_{SO} &
		\end{tikzcd}.
	\end{equation}
\end{definition}

We note that a bundle map \(\Phi:\Phat\to\Phat'\) is an equivalence of homogeneous spin-\(H\) structures, if and only if it is equivariant under the left \(G\)-action \emph{and} the right \(\Spin^H(V)\)-action, and that every morphism (equivariant bundle map) of principal bundles for the same group is an isomorphism.

\subsection{Classification}
\label{sec:homog-gen-spin-class}

For any \(h\in H\), let \(\gamma_h:\Spin^H(V)\to\Spin^H(V)\) denote right-conjugation by \([\1,h]\in\Spin^H(V)\); \(\gamma_h([a,h'])=[a,h^{-1}h'h]\). If \(\varphihat:K\to\Spin^H(V)\) is a lift of the linear isotropy representation \(\varphi:K\to\SO(V)\) as in the diagram \eqref{eq:lift-to-spin-H}, then \(\gamma_h\circ\varphihat\) is also a lift for any \(h\in H\). We say that two lifts \(\varphihat,\varphihat'\) are \emph{\(H\)-conjugate} if there exits \(h\in H\) such that \(\varphihat'(k)=\gamma_h\circ\varphihat(k)\). We then have the following. See \cite{Bar1992,Cahen1993,Serrano2022,Artacho2023} for related results.

\begin{proposition}\label{prop:homogeneous-spin-H}
	Let \((G,K,\eta)\) be a metric Klein pair where \(G\) is a connected and simply connected Lie group. Then there is a one-to-one correspondence between equivalence classes of homogeneous spin-\(H\) structures on \((M=G/K,g)\) and \(H\)-conjugacy classes of lifts of the isotropy representation to \(\Spin^H(V)\) as in the diagram \eqref{eq:lift-to-spin-H}.
\end{proposition}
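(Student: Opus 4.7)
The plan is to show that the assignment $\varphihat \mapsto G \times_{\varphihat}\Spin^H(V)$ already described in the text descends to a bijection on equivalence/conjugacy classes; I would proceed in three steps, corresponding to well-definedness on $H$-conjugacy classes, injectivity, and surjectivity. For the first, given $\varphihat' = \gamma_h \circ \varphihat$ with $h \in H$, I would exhibit an explicit equivalence $\Phi: G \times_{\varphihat}\Spin^H(V) \to G \times_{\varphihat'}\Spin^H(V)$, say $\Phi([g, A]) = [g, [\1, h^{-1}] A]$. Well-definedness on equivalence classes follows from a direct manipulation of the defining relation $\gamma_h(X) = [\1, h]^{-1} X [\1, h]$; the $G$- and $\Spin^H(V)$-equivariance are immediate from the formula, and compatibility with the structure maps $\varpihat, \varpihat'$ follows since $[\1, h^{-1}]$ lies in the kernel of $\pihat$.

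For injectivity, suppose $\Phi: \Phat \to \Phat'$ is an equivalence between the homogeneous spin-$H$ structures built from two lifts $\varphihat, \varphihat'$. Writing $\hat{f}_o := [e, \1]$ for the distinguished basepoint, compatibility of $\Phi$ with the structure maps forces $\Phi(\hat{f}_o) = [e, A_0]$ with $\pihat(A_0) = \1$, so $A_0 = [\1, h]$ for some $h \in H$. Using the identity $k \cdot \hat{f}_o = \hat{f}_o \cdot \varphihat(k)$ (and its analogue in $\Phat'$), I would apply $\Phi$ via $\Spin^H(V)$-equivariance on one side and $G$-equivariance on the other; comparing the two expressions yields $A_0 \varphihat(k) = \varphihat'(k) A_0$ for all $k \in K$, i.e.\ $\varphihat' = \gamma_{h^{-1}} \circ \varphihat$.

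For surjectivity, starting from an arbitrary homogeneous spin-$H$ structure $\varpihat: \Phat \to F_{SO}$, I would fix a lift $\hat{f} \in \Phat_o$ of a chosen base-point frame $f \in (F_{SO})_o$. Since $K$ stabilises $o$, each $k \cdot \hat{f}$ lies in $\Phat_o$, and the freeness and transitivity of the $\Spin^H(V)$-action on the fibre determine a unique $\varphihat(k) \in \Spin^H(V)$ with $k \cdot \hat{f} = \hat{f} \cdot \varphihat(k)$. Applying $\varpihat$ gives $\pihat \circ \varphihat = \varphi$; the homomorphism property is a consequence of the $G$-action axioms; and smoothness follows from $A \mapsto \hat{f} \cdot A$ being a diffeomorphism $\Spin^H(V) \to \Phat_o$. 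Replacing $\hat{f}$ by $\hat{f} \cdot [\1, h]$ replaces $\varphihat$ by $\gamma_h \circ \varphihat$, so the $H$-conjugacy class is intrinsic. Finally, I would check that $(g, A) \mapsto g \cdot \hat{f} \cdot A$ descends to a $G$- and $\Spin^H(V)$-equivariant bundle isomorphism $G \times_{\varphihat}\Spin^H(V) \xrightarrow{\cong} \Phat$, using transitivity of $G$ on $M$ for surjectivity and the defining relation for $\varphihat$ for injectivity on fibres.

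The main obstacle I anticipate is the careful bookkeeping of left/right actions and the $K$-equivalence relation in the associated bundle, particularly in verifying in the final step that the reconstructed map descends globally and faithfully represents the given $G$-action on $\Phat$. The connectedness and simple connectedness of $G$ are not needed for any of the local fibre-wise computations above but enter implicitly to ensure that this global descent is unobstructed, paralleling the role of these hypotheses in the classical spin case cited in the text.
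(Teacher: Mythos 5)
Your proposal is correct and follows essentially the same route as the paper: the explicit equivalence $[g,A]\mapsto[g,[\1,h^{-1}]A]$ between structures built from conjugate lifts, the extraction of $\varphihat$ from a choice of basepoint lift $\hat{f}$ via $k\cdot\hat{f}=\hat{f}\cdot\varphihat(k)$ with the ambiguity in $\hat{f}$ accounting exactly for $H$-conjugacy, and the reconstruction map $[g,A]\mapsto g\cdot\hat{f}\cdot A$. The only cosmetic difference is that you package the argument as well-definedness/injectivity/surjectivity (with a slightly more direct injectivity step comparing basepoint images) where the paper exhibits the two assignments and checks they are mutually inverse; the content is the same.
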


\begin{proof}
	Throughout this proof, fix some \(f\in(F_{SO})_o\) and use this frame, as in \eqref{eq:homog-frame-bundle-assoc} and the preceding discussion, to view \(F_{SO}\) as an \(\SO(V)\)-principal bundle and identify it with \(G\times_\varphi\SO(V)\).

	We have already shown that a lift \(\varphihat\) of the isotropy representation \(\varphi\) gives rise to a homogeneous spin-\(H\) structure. Let \(h\in H\) so that \(\varphihat':=\gamma_h\circ\varphihat\) is a conjugate lift. We must show that there is an equivalence of the associated homogeneous spin-\(H\) structures
	\begin{equation}
		\Phi: G\times_{\varphihat} \Spin^H(V) \xlongrightarrow{\cong} G\times_{\varphihat'} \Spin^H(V).
	\end{equation}
	We define \(\Phi([g,A])=[g,h^{-1}A]'\), where \(g\in G\), \(A\in\Spin^H(V)\) and the prime on the right-hand side indicates that the equivalence class is taken in \(G\times_{\varphihat'} \Spin^H(V)\). This is well-defined since if we have \(\tilde{g}\in G\) and \(\tilde{A}\in\Spin^H(V)\) such that \([\tilde{g},\tilde{A}]=[g,A]\), there exists \(k\in K\) such that \(\tilde{g}=gk^{-1}\), \(\tilde{A}=\varphihat(k)A\), so
	\begin{equation}
		[\tilde{g},h^{-1}\tilde{A}]' = [gk^{-1},h^{-1}\varphihat(k)A]' 
			= [gk^{-1},\gamma_h(\varphihat(k))h^{-1}A] 
			= [gk^{-1},\varphihat'(k)h^{-1}A]'
			= [g,h^{-1}A]'.
	\end{equation}
	It is clear from the definition that \(\Phi\) is a \(G\)-equivariant morphism of \(\Spin^H(V)\)-principal bundles covering \(F_{SO}\cong G\times_\varphi\SO(V)\), whence it is an isomorphism.
		
	On the other hand, given a homogeneous spin-\(H\) structure \(\varpihat:\Phat\to F_{SO}\), we must produce a conjugacy class of lifts \(\varphihat\) of the isotropy representation. Let us first fix a lift \(\hat{f}\in\Phat_o\) of the frame \(f\), i.e. \(\varpihat(\hat{f})=f\). This lift is unique only up to (right) multiplication by elements of \(H\); we will consider the effect of this shortly. Since the subgroup \(K\) fixes \(o\), it preserves the fibre \(\Phat_o\), upon which \(\Spin^H(V)\) acts freely and transitively, so for each \(k\in K\) there exists a unique \(\varphihat(k)\in\Spin^H(V)\) such that \(k\cdot \hat{f} = \hat{f}\cdot\varphihat(k)\). This defines a homomorphism \(\varphihat:K\to\Spin^H(V)\) since for all \(k,k'\in K\),
	\begin{equation}
		k\cdot(k'\cdot \hat{f}) = k\cdot(\hat{f}\cdot\varphihat(k')) = (k\cdot \hat{f})\cdot\varphihat(k') = (\hat{f}\cdot\varphihat(k))\cdot\varphihat(k') = \hat{f}\cdot(\varphihat(k)\varphihat(k'))
	\end{equation}
	while \((kk')\cdot \hat{f}=\hat{f}\cdot\varphihat(kk')\), so \(\varphihat(kk')=\varphihat(k)\varphihat(k')\). We can also deduce that
	\begin{equation}\label{eq:kf}
		k\cdot f = k\cdot\varpihat(\hat{f}) = \varpihat(k\cdot\hat{f}) = \varpihat(\hat{f}\cdot\varphihat(k)) = \varpihat(\hat{f})\cdot \pihat(\varphihat(k)) = f\cdot(\pihat\circ\varphihat(k)).
	\end{equation}	
	Now let us represent \(\varphi(k)\in\SO(V)\) as a matrix \([\varphi(k)\indices{^i_j}]\) in the frame \(f=(f_i)_{i=1}^{\dim V}\); this allows us to deduce that
	\begin{equation}\label{eq:kf-linear-iso}
		(f\cdot \varphi(k))_i  = \sum_{j=1}^{\dim V}f_j \varphi(k)\indices{^j_i} = \varphi(k)f_i = k\cdot f_i,
	\end{equation}
	where we use the right action of \(\SO(V)\) on \((F_{SO})_o\), the definition of the matrix elements, the natural action of \(\SO(V)\) on \(V\) and the action of \(K\subseteq G\) on \(V\). But then using \eqref{eq:kf-linear-iso}, the definition of the action of \(G\) on \(F_{SO}\) (eq.~\eqref{eq:g-f}) and \eqref{eq:kf} in turn, we have
	\begin{equation}
		f\cdot \varphi(k) = (k\cdot f_i)_{i=1}^{\dim V} = k\cdot(f_i)_{i=1}^{\dim V} = k\cdot f = f\cdot(\pihat\circ\varphihat(k)),
	\end{equation}
	whence \(\varphi(k)=\pihat\circ\varphihat(k)\) for all \(k\in K\); \(\varphihat\) is a lift of \(\varphi\). Since the definition of \(\varphihat\) involved a choice of lift \(\hat{f}\) of \(f\), we must consider the effect of choosing a different lift \(\hat{f}'\) of \(f\) and defining a lift \(\varphihat':K\to\Spin^H(V)\) of \(\varphi\) by \(k\cdot\hat{f}'=\hat{f}'\cdot\varphihat'(k)\). There exists a unique \(h\in H\) such that \(\hat{f}'=\hat{f}\cdot h\), so
	\begin{equation}
		\hat{f}'\cdot\varphihat'(k) = k\cdot (\hat{f}\cdot h) = (k\cdot\hat{f})\cdot h = (\hat{f}\cdot\varphihat(k))\cdot h = \hat{f}'\cdot(h^{-1}\varphihat(k)h) 
	\end{equation}
	thus \(\varphihat'=\gamma_h\cdot\varphihat\). Hence we have a well-defined assignment of \(H\)-conjugacy classes of lifts of the isotropy representation to homogeneous spin-\(H\) structures. We must still show that \emph{equivalent} homogeneous spin-\(H\) structures give rise to the same class of lifts. Suppose we have an equivalence \(\Phi:\Phat\to\Phat'\), define \(\hat{f}\) and \(\varphihat\) as before, define \(\hat{f}':=\Phi(\hat{f})\in \Phat'_o\) (clearly this is a lift of \(f\)), and let \(k\cdot\hat{f}'=\hat{f}'\cdot\varphihat'(k)\) define a new lift \(\varphihat':K\to\Spin^H(V)\). Then 
	\begin{equation}
		\hat{f}'\cdot\varphihat'(k) = k\cdot\Phi(\hat{f}) = \Phi(k\cdot\hat{f}) = \Phi(\hat{f}\cdot\varphihat(k)) = \hat{f}'\cdot\varphihat(k),
	\end{equation}
	whence \(\varphihat'=\varphihat\).
		
	It remains to show that the association of a homogeneous spin-\(H\) structure to a lift of the isotropy representation (up to the respective equivalences) and vice-versa are inverse to each other. In one direction, suppose we have a homogeneous spin-\(H\) structure \(\Phat\) and define \(\hat{f}\) and \(\varphihat\) as above. Let us define an equivalence of spin-\(H\) structures
	\begin{equation}
		\Phi: G\times_{\varphihat}\Spin^H(V) \xlongrightarrow{\cong} \Phat
	\end{equation}
	by \([g,A]\mapsto g\cdot\hat{f}\cdot A\). This is well-defined since for all \(k\in K\), \([gk,\varphihat(k^{-1})A]\) is mapped to
	\begin{equation}
		(gk)\cdot\hat{f}\cdot(\varphihat(k^{-1})A) = g\cdot(k\cdot\hat{f}\cdot\varphihat^{-1}(k))\cdot A = g\cdot((\hat{f}\cdot\varphihat(k))\cdot\varphihat^{-1}(k))\cdot A = g\cdot \hat{f}\cdot A,
	\end{equation}
	where we have used the definition of \(\varphihat\); it is clear from the definition that \(\Phi\) is bi-equivariant and covers \(F_{SO}\), whence it is indeed an equivalence. In the other direction, suppose we have a lift \(\varphihat\) and use it to define \(\varpihat:\Phat:=G\times_{\varphihat}\Spin^H(V)\to F_{SO}\). Recall that the isomorphism \(G\times_\varphi\SO(V)\xrightarrow{\cong} F_{SO}\) is given by \([g,A]\mapsto g\cdot f\cdot A\). If \(\hat{f}\in\Phat_o\) is a lift of \(f\) then \(\hat{f}=[1_G,h]\) for some \(h\in H\), so for all \(k\in K\) we have
	\begin{equation}
		k\cdot\hat{f} = k\cdot[1_G,h] = [k,h] = [1_G,\varphihat(k)h] = [1_G,h]\cdot(h^{-1}\varphihat(k)h)  = \hat{f}\cdot \gamma_h(\varphihat(k))
	\end{equation}
	so the lift associated to \(\varpihat:\Phat\to F_{SO}\) by the procedure described above is \(\gamma_h\circ\varphihat\), a conjugate of \(\varphihat\),  which completes the proof.
\end{proof}

\subsubsection{Reducibility}
\label{sec:homog-gen-spin-reducibility}

Let us conclude our characterisation of homogeneous spin-\(H\) structures by considering equivariant reductions. If we have a homogeneous spin-\(H\) structure \(\varpihat:\Phat\to F_{SO}\) corresponding to a lift \(\varphitilde:K\to\Spin(V)\), and there is a further lift
\(\varphitilde\times\phi:K\to\Spin(V)\times H\) along \(\nu:\Spin(V)\times H\to\Spin^H(V)\) making the following diagram commute:
\begin{equation}\label{eq:iso-rep-double-lift}
\begin{tikzcd}
	&	& \Spin(V)\times H \ar[d,two heads,"\nu"]	\\
	& K	\ar[dr,"\varphi"]\ar[r,"\varphihat"]\ar[ur,dashed,"\varphitilde\times\phi"] \ar[ur,dashed]& \Spin^H(V) \ar[d,two heads,"\pihat"]\\
	&	& \SO(V)
\end{tikzcd},
\end{equation}
then \(\varpihat\) is reducible (in the sense of Definition~\ref{def:spin-G-reducible}) since we have a homogeneous \emph{spin} structure
\begin{equation}
	P := G\times_{\varphitilde}\Spin(V) \mapsto G\times_{\varphi}\SO(V) \cong F_{SO}
\end{equation}
and an \(H\)-principal bundle \(Q := G\times_{\phi}H \to M\) such that \(P\times_M Q\) is a \(G\)-equivariant reduction of \(\Phat\):
\begin{equation}
	(P\times_M Q)\times_\nu \Spin^H(V)
		\cong (G\times_{\varphitilde\times\phi}(\Spin(V)\times H))\times_\nu \Spin^H(V)
		\cong G\times_{\phihat}\Spin^H(V)
		= \Phat.
\end{equation}

In the special case that \(\phi:K\to H\) happens to be trivial, \(Q\to M\) is the trivial \(H\)-bundle and the image of \(\varphihat\) is contained in the image of \(\Spin(V)\) in \(\Spin^H(V)\). But then since the image of \(H\) commutes with that of \(\Spin(V)\), we have \(\gamma_h\circ\varphihat=\varphihat\) for all \(h\in H\); \(\varphihat\) is \(H\)-conjugacy invariant. On the other hand, a lift \(\varphihat\) which is fixed by \(H\)-conjugation can take values in \(\Spin(V)\times_{\ZZ_2}Z(H)\subseteq\Spin^H(V)\) (where \(Z(H)\) is the centre of \(H\)) which is the image of \(\Spin(V)\) if and only if \(Z(H)=\ZZ_2=\{\pm \1\}\); if this is the case, we can trivially lift \(\varphihat\) along \(\nu\), and we conversely have that \(H\)-conjugacy invariant lifts \(\varphihat\) correspond to reducible spin-\(H\) structures with trivial \(Q\). We have \(Z(H)=\ZZ_2\) if, for example, \(H=\SO(p,q)\) or \(\SU(2)\) (i.e. in the spin-\(h\) case), but not if \(H=\SU(n)\) for \(n>2\) or \(U(n)\) for \(n\geq 1\).

\subsection{Associated bundles and connections}
\label{sec:homog-gen-spin-assoc-bundles}

The canonical \(\Hbar:=H/\ZZ_2\)-bundle can be viewed as an associated bundle to \(G\to M\) by composing the lift with the canonical map \(\pi_H:\Spin^H(V)\to\Hbar\):
\begin{equation}
	\Qbar = \Phat\times_{\pi_H}\Hbar \cong G\times_{\pi_H\circ\phihat}\Hbar;
\end{equation}
and similarly for its adjoint bundle,
\begin{equation}
	\ad\Qbar = \Phat\times_{\Ad^{\Hbar}\circ\pi_H}\fh 
		\cong G\times_{\Ad^{\Hbar}\circ\pi_H\circ\phihat}\fh.
\end{equation}

We also can adapt the discussion of associated vector bundles in \S\ref{sec:spin-G-assoc-bundles} to the homogeneous setting as follows. For any representation \(\varrho:\Spin^H(V)\to\GL(W)\), we have a natural \(G\)-equivariant vector bundle isomorphism
\begin{equation}
	\Phat\times_\varrho W = (G\times_{\varphihat}\Spin^H(V)) \times_\varrho W \xlongrightarrow{\cong} G\times_{\varrho\circ\varphihat} W
\end{equation}
given by \([g,A,w]\mapsto [g,\varrho(A)w]\). In particular,
\begin{equation}
	TM \cong \Phat\times_{\pihat} V \cong G\times_\varphi V,
\end{equation}
and similarly for other even associated bundles (\(T^*M\), \(\Wedge^pTM\), \(\ad F_{SO}\), \(\ad\Qbar\) etc.).

Recall from \S\ref{sec:connections} that on generalised spin manifolds, we do not have a canonical lift of the Levi-Civita principal connection \(\omega\) on the frame bundle \(F_{SO}\to M\) along \(\varpihat:\Phat\to F_{SO}\) (as we do on a spin structure \(\varpi:P\to F_{SO}\)) but must instead employ a connection \(\eA\) which lifts the sum of \(\omega\) and a principal connection \(\alpha\) on the canonical \(\Hbar\)-bundle \(\Qbar\to M\) along the two-sheeted cover \(\varpihat\times\varpi_H:\Phat\to F_{SO}\times_M\Qbar\); there is no canonical choice for \(\alpha\) in general.

The following lemma gives a description of these connections in the homogeneous spin-\(H\) context. It is a straightforward application of Wang's Theorem \cite{Wang1958}\cite[Ch.II,Thm.11.5]{Nomizu1969}.

\begin{lemma}[Wang's Theorem for homogeneous spin-H structures]\label{lemma:Wang-spin-H}
	Let \((G,K,\eta)\) be a metric Klein pair with homogeneous spin-\(H\) structure \(\varpihat:\Phat=G\times_{\varphihat}\Spin^H(V)\to F_{SO}\) and canonical \(\Hbar\)-bundle \(\Qbar=G\times_{\pi_H\circ\phihat}\Hbar\to M=G/K\) induced by a lift \(\varphihat:K\to\Spin^H(V)\) of the isotropy representation \(\varphi:K\to\SO(V)\), where \(V=\fg/\fk\). Then there is a one-to-one correspondence between \(G\)-invariant principal connections \(\eA\in\Omega^1(P;\fso(V)\oplus\fh)\) on \(\Phat\to M\) and linear maps \(\Phihat:\fg\to\fso(V)\oplus\fh\) such that
	\begin{enumerate}
		\item \(\Phihat\circ\Ad^G_k = \Ad^{\Spin^H(V)}_{\varphihat(k)}\circ\Phihat\) for all \(k\in K\),
		\item \(\Phihat(X)=(d_\1\varphihat)(X)\) for all \(X\in\fk\);
	\end{enumerate}
	the maps \(\Phihat\) are known as \emph{Nomizu maps}. The correspondence is given by
	\begin{equation}
		\Phihat(X) = \eA_p(\widehat\xi_X)
	\end{equation}
	where \(X\in \fg\) and \(\widehat\xi_X\) is the fundamental vector field on \(\Phat\) associated to \(X\) and \(p=[1_G,\1]\) is the natural base-point in \(\Phat_o\).
	
	Moreover, the compositions \(\pr_{\fso(V)}\circ\Phihat\) and \(\pr_{\fh}\circ\Phihat\) are Nomizu maps for the connections \(\omega\in\Omega^1(F_{SO};\fso(V))\) and \(\alpha\in\Omega^1(\Qbar;\fh)\) appearing in \eqref{eq:principal-connections-add}. The connection \(\eA\) is torsionless, or equivalently \(\omega\) is the Levi-Civita connection, if and only if
	\begin{equation}
			\pr_{\fso(V)}(\Phihat(X))\cdot\overline{Y} - \pr_{\fso(V)}(\Phihat(Y))\cdot\overline{X} - \overline{\comm{X}{Y}} = 0
	\end{equation}
	for all \(X,Y\in\fg\), where the \(X\mapsto\overline{X}\) denotes the natural map \(\fg\to\fg/\fk \cong V\).
	
	Finally, the curvature \(\Omega_\eA\in\Omega_G^2(\Phat;\fso(V)\oplus\fh)\) is given by
	\begin{equation}
		(\Omega_\eA)_p(\widehat\xi_X,\widehat\xi_Y) = \comm{\Phihat(X)}{\Phihat(Y)}_{\fso(V)\oplus\fh} - \Psi(\comm{X}{Y}_\fg),
	\end{equation}
	and by \eqref{eq:principal-curvatures-add}, the \(\fso(V)\)- and \(\fh\)-components of this expression give the curvature values \((\Omega_\omega)_p(\widehat\xi_X,\widehat\xi_Y)\) and \((\Omega_\alpha)_p(\widehat\xi_X,\widehat\xi_Y)\) respectively. These values determine the curvature 2-forms uniquely by homogeneity.
\end{lemma}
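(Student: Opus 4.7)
The plan is to recognise this lemma as a direct application of Wang's Theorem (cited as indicated) to the $G$-equivariant principal $\Spin^H(V)$-bundle $\Phat = G\times_{\varphihat}\Spin^H(V) \to M$. Wang's Theorem asserts that for any principal bundle of the homogeneous form $G\times_\lambda L \to G/K$ with structure homomorphism $\lambda: K \to L$, $G$-invariant principal connections correspond bijectively to linear maps $\Phi: \fg \to \Lie L$ satisfying (i) $\Phi\circ\Ad^G_k = \Ad^L_{\lambda(k)}\circ\Phi$ for all $k \in K$ and (ii) $\Phi|_\fk = d_\1\lambda$, the bijection being $\Phi(X) = \eA_p(\widehat\xi_X)$ at the natural base-point $p = [1_G,\1]$. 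Taking $L = \Spin^H(V)$ and $\lambda = \varphihat$ produces the first part of the lemma verbatim.

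Next I would establish the splitting into Nomizu maps for $\omega$ and $\alpha$. By \eqref{eq:principal-connections-add} we have $\eA = \varpihat^*\omega + \varpi_G^*\alpha$; since $\varpihat$ and $\varpi_G$ are $G$-equivariant principal bundle maps covering the homomorphisms $\pihat$ and $\pi_H$, their derivatives send the fundamental vector field $\widehat\xi_X$ at $p$ to the fundamental vector fields generated by $d\pihat(X,x) = X$ and $d\pi_H(X,x) = x$ on $F_{SO}$ and $\Qbar$ at their respective base-points. Evaluating at $p$ then shows that $\pr_{\fso(V)}\circ\Phihat$ and $\pr_{\fh}\circ\Phihat$ are precisely the Nomizu maps for $\omega$ and $\alpha$, with the individual equivariance properties following from the decomposition of the adjoint action \eqref{eq:spin-G-adjoint}.

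For the torsion condition I would invoke the standard formula expressing the torsion of an invariant linear connection on a homogeneous space in terms of its Nomizu map (see \cite[Ch.X]{Nomizu1969}). Under the $G$-equivariant identification $TM \cong G\times_\varphi V$, in which $\overline X \in V$ represents the value at $o$ of the vector field on $M$ generated by $X \in \fg$, the covariant derivative determined by $\omega$ acts at $o$ on $\overline Y$ by $\pr_{\fso(V)}(\Phihat(X))\cdot\overline Y$; the vanishing-of-torsion identity at $o$ then reduces to the equation given, and $G$-invariance propagates it over all of $M$.

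Finally, the curvature formula is the standard expression for the curvature of an invariant connection evaluated on fundamental vector fields at the base-point; projecting onto $\fso(V)$ and $\fh$ and applying \eqref{eq:principal-curvatures-add} immediately yields the corresponding values for $\Omega_\omega$ and $\Omega_\alpha$, and $G$-equivariance of the curvatures together with transitivity of $G$ on $M$ determines them everywhere. There is no substantive obstacle here: the result is fundamentally a repackaging of Wang's Theorem. The only point requiring care is the book-keeping in the second step, namely verifying that $\Phihat$ decomposes compatibly with \eqref{eq:principal-connections-add}, which in turn rests on the compatibility of $\varpihat$ and $\varpi_G$ with fundamental vector fields under the group homomorphisms $\pihat$ and $\pi_H$.
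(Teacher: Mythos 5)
Your proposal is correct and is exactly the route the paper intends: the paper gives no written proof of this lemma, saying only that it ``is a straightforward application of Wang's Theorem'' with the same references you invoke, and your four steps (Wang's bijection for \(L=\Spin^H(V)\) with \(\lambda=\varphihat\), the splitting of \(\Phihat\) induced by \(\eA=\varpihat^*\omega+\varpi_G^*\alpha\) from \eqref{eq:principal-connections-add}, the standard torsion formula for an invariant connection, and the standard Wang curvature formula) are precisely how that application goes. One minor wording slip worth fixing: for \(X\in\fg\), the pushforwards of \(\widehat\xi_X\) under \(\varpihat\) and the projection to \(\Qbar\) are the fundamental vector fields of the \emph{left} \(G\)-action generated by \(X\) on \(F_{SO}\) and \(\Qbar\) (this follows from left \(G\)-equivariance of those maps), not fields ``generated by \(d\pihat(X,x)\)'' --- the latter identity concerns the structure-group (vertical, right-action) direction --- but the conclusion you draw, that \(\pr_{\fso(V)}\circ\Phihat\) and \(\pr_{\fh}\circ\Phihat\) are the Nomizu maps of \(\omega\) and \(\alpha\), is the right one.
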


\printbibliography[heading=bibintoc,title={Bibliography}]

\end{document}